\documentclass{article}




\usepackage{lipsum}
\usepackage{amsfonts, amsmath, amsthm}
\usepackage{graphicx}
\usepackage{epstopdf}
\usepackage{algorithmic}
\usepackage{url}

\usepackage{amssymb}
\usepackage{mathrsfs}
\usepackage{tikz}
\usetikzlibrary[shapes.arrows, calc, positioning]
\usetikzlibrary{arrows}

\newcommand{\mob}{\textnormal{\texttt{mob}}}
\newcommand{\tot}{\textnormal{\texttt{tot}}}
\newcommand{\net}{\textnormal{\texttt{net}}}

\newcommand{\VV}{v}
\newcommand{\UU}{u}
\newlength{\tmpln}
\newlength{\tmplnn}
\newlength{\tmplnnn}

\ifpdf
 \DeclareGraphicsExtensions{.eps,.pdf,.png,.jpg}
\else
 \DeclareGraphicsExtensions{.eps}
\fi


\newtheorem{remark}{Remark}


\newtheorem{corol}{Corollary}
\newtheorem{theorem}{Theorem}
\newtheorem{lemma}{Lemma}
\newtheorem{prop}{Proposition}


\title{Singular perturbation for a two-class Processor-Sharing queue with impatience}

\author{R. Nasri\thanks{Orange Labs, OLN/GDM, Orange Gardens, 44 avenue de la R\'epublique, 
CS 50010, 92326 Chatillon Cedex (\texttt{ridha.nasri@orange.com}).}
\and F. Simatos\thanks{ISAE-SUPAERO, Université de Toulouse, 10 avenue Edouard Belin, 31055 Toulouse, France, (\texttt{florian.simatos@isae.fr}).}
\and A. Simonian\thanks{Orange Labs, OLN/GDM, Orange Gardens, 44 avenue de la R\'epublique, 
CS 50010, 92326 Chatillon Cedex (\texttt{alain.simonian@orange.com}).}}

\usepackage{amsopn}


\ifpdf
\fi




\begin{document}

\maketitle

\begin{abstract}
A two-class Processor-Sharing queue with one impatient class is studied. Local exponential decay rates for its stationary distribution $(N(\infty), M(\infty))$ are established in the heavy traffic regime where the arrival rate of impatient customers grows proportionally to a large factor~$A$. This regime is characterized by two time-scales, so that no general Large Deviations result is applicable. In the framework of singular perturbation methods, we instead assume that an asymptotic expansion of the solution of associated Kolmogorov equations exists for large $A$ and derive it in the form
\[ \mathbb{P}(N(\infty) = Ax, M(\infty) = Ay) \sim \frac{g(x,y)}{2\pi A} \cdot e^{-A \, H(x,y)}, \qquad x > 0, \; y > 0, \]
with explicit functions $g$ and $H$. 

This result is then applied to the model of mobile networks proposed in~\cite{POAS16} and accounting for the spatial movement of users. We give further evidence of a unusual growth behavior in heavy traffic in that the stationary mean queue length $\mathbb{E}(N_\mob(\infty))$ and $\mathbb{E}M_\mob(\infty))$ of each customer-class increases proportionally to 
$$
\mathbb{E}(N_\mob(\infty)) \propto \mathbb{E}(M_\mob(\infty)) \propto \displaystyle \log\left(\frac{1}{1-\varrho_\tot}\right)
$$
with system load $\varrho_\tot$ tending to 1, instead of the usual $1/(1-\varrho_\tot)$ growth behavior.
%
%
%
%
\end{abstract}



\section{Queuing Model and Main Results} \label{sec:model-main-results}
We describe the addressed queuing system and the specific asymptotic regime considered to evaluate its stationary occupancy distribution. We then state our main mathematical results and apply them to the account of spatial user movement in mobile networks.

\subsection{Two-class Processor-Sharing queue with one impatient class}
In this paper, we consider a two-class Markovian Processor-Sharing (PS) queue where one class of users are impatient and leave the system at rate $\theta > 0$. This queuing system is depicted in Figure~\ref{PSqueue} and can be described as follows:
\begin{itemize}
	\item the arrival process of patient (resp.\ impatient) customers entering the queue is Poisson with rate $\alpha$ (resp.\ $\beta$);
	\item service requirements for successive patient (resp.\ impatient) customers are i.i.d.\ and exponentially distributed with mean $1/\mu$ (resp.\ $1/\nu$);
	\item server capacity is normalized to unity and customers are served according to the PS service discipline, that is, when there are $k \geqslant 1$ customers in service, each one is served instantaneously at rate $1/k$;
	\item the sojourn times of impatient customers in queue (before possible service completion) are i.i.d.\ and exponentially distributed with mean $1/\theta$.
\end{itemize}

\begin{figure}[t]
	\begin{center}
		\begin{tikzpicture}
			[block/.style={text width={width("Patient S-customers")+2pt}},
			align=center,
			scale=.99]
			\setlength{\tmpln}{25mm}
			\setlength{\tmplnn}{10mm}
			\setlength{\tmplnnn}{.1\tmpln}
			\draw (0,0) -- ++ (\tmpln,0) -- ++ (0,\tmplnn) -- ++ (-\tmpln,0);
			\foreach \x in {0,1,...,5}{
				\draw [fill=red!50] (\tmpln-\x*\tmplnnn,0) rectangle ++ (-\tmplnnn, \tmplnn);}
			\foreach \x in {2,3,5}{
				\draw [fill=blue!50] (\tmpln-\x*\tmplnnn,0) rectangle ++ (-\tmplnnn, \tmplnn);}
				\draw [fill=green!50] (\tmpln+12pt,.5\tmplnn) circle (10pt);
				\node [single arrow, draw=black, rotate=-30, fill=blue!50, minimum height=15mm] (fleche-S) at (-1.5, \tmplnn+5) {};
				\node [block] at ($(fleche-S)+(-2.5,0)$) {Patient S-customers (arrival rate $\alpha$)};
				\node [single arrow, draw=black, rotate=30, fill=red!50, minimum height=15mm] (fleche-M) at (-1.5, -.5) {};
				\node [text width={width("Impatient M-customers")+2pt}] at ($(fleche-M)+(-2.5,0)$) {Impatient M-customers (arrival rate $\beta$)};
				\node [single arrow, draw=black, thick, minimum height=15mm] (fleche-D) at (\tmpln + 22pt + 10mm, .5 \tmplnn) {};
				\node [text width={width("Departure due to service ")+2pt}, anchor=south] at (\tmpln+25mm, \tmplnn) {Departure due to service completion (rate $(\mu n + \nu m)/(n+m)$)};
				\node [single arrow, draw=black, thick, minimum height=15mm, dashed, rotate=-30] at (\tmpln + 22pt + 10mm, -.5\tmplnn) {};
				\node [text width={width("Departure due to impatience ")+2pt}, anchor=north] at (\tmpln+20mm, -1.4\tmplnn) {Departure due to impatience (rate $\theta m$)};
				\node at (.5\tmpln,-.5) {PS service discipline};
		\end{tikzpicture}
	\end{center}
\caption{Multi-class PS queue with impatience.}
\label{PSqueue}
\end{figure}
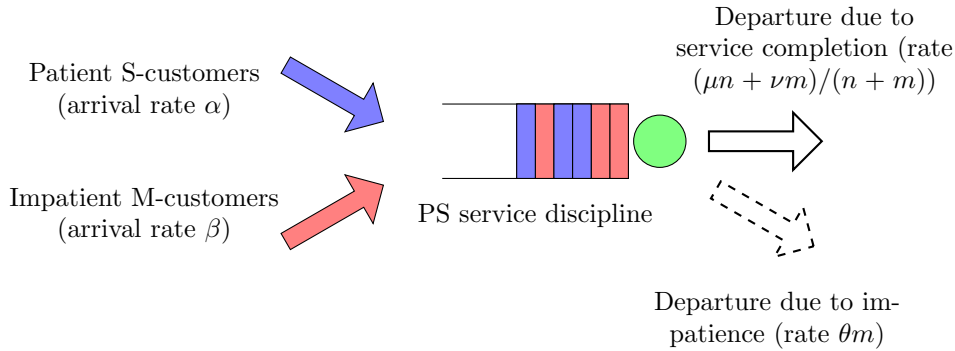

\noindent
This defines a birth-and-death process $(N,M) = ((N(t), M(t)), t \geqslant 0)$ with values in~$\mathbb{N}^2$ and whose infinitesimal generator~$\Omega$ is given by
\begin{multline*}
	\Omega(f)(n,m) = \alpha \left( f(n+1,m) - f(n,m) \right) + \beta \left( f(n,m+1) - f(n,m) \right)\\
	+ \frac{\mu n}{n+m} \left( f(n-1,m) - f(n,m) \right) + \left( \frac{\nu m}{n+m} + \theta m \right) \left( f(n,m-1) - f(n,m) \right)
\end{multline*}
for $f: \mathbb{N}^2 \to \mathbb{R}$ and $(n, m) \in \mathbb{N}^2$ (with the convention $0/0 = 0$). This process has a stationary distribution $(N(\infty), M(\infty))$ if and only if the stability condition
\begin{equation}
\varrho = \frac{\alpha}{\mu} < 1
\label{StabCond}
\end{equation}
holds~\cite[Sect.12.2, Prop.12.1]{SPRI18}, where $\varrho$ denotes the load of patient customers offered to the system. Note that this stability condition only involves the load of patient customers (through their arrival rate $\alpha$ and service requirement $\mu$) and not that of impatient ones, as the latter can always leave the system in a finite time whatever the system load.

\subsection{Two time scales in the heavy traffic regime}
\label{TTS}
In this queue, we are interested in the heavy traffic regime where $\beta$ tends to infinity, while the four other parameters $\alpha$, $\mu$, $\nu$ and $\theta$ remain fixed. We will consider $A = \beta / \theta$ as our scaling parameter and write $A \to \infty$ to mean that $\beta \to \infty$ with all other parameters kept fixed. In this regime, both processes $N$ and $M$ become of the order of $A$ but evolve on different time scales as can be observed when considering their fluid behavior.

As $A$ becomes large, $M$ becomes large and so departures are mostly due to the impatience term $\theta \cdot m$, given $M = m$ and $N = n$, since the service term $\nu m / (n+m)$ remains bounded. If this service term could be neglected, then $M$ would be equal to~$M'$, the $M/M/\infty$ queue length with input rate $\beta = A \theta$ and service rate $\theta$. As specified below, $M$ and $M'$ indeed behave very similarly in the considered heavy traffic regime. In fact, a simple coupling argument between $M$ and $M'$ makes it possible to transfer to $M$ the well-known heavy traffic behavior of $M'$, namely, to show that the process $(M(t)/A, t \geqslant 0)$ scaled only in space converges (weakly, in a functional sense) to the deterministic solution $(y(t), t \geqslant 0)$ to the ordinary differential equation (ODE)
$$
\frac{\mathrm{d}y}{\mathrm{d}t} = \theta - \theta \, y
$$
and that its stationary distribution $M(\infty) / A$ converges to the unique stable point
\begin{equation}
y^* = 1
\label{Defy*}
\end{equation}
of this ODE.

On the other hand, arrival and service rates of $N$ remain bounded: they are respectively equal to $\alpha$ and $\mu n / (n + m) \in [0,\mu]$. As defined by this service rate, component $N$ needs to become commensurate with component $M$ in order to obtain some service and so it will also live on the $O(A)$ space scale. But since its arrival rate is bounded, it needs a time of order $O(A)$ to reach such values and it is indeed on this time scale that it evolves. On this time scale, however, $M$ evolves very rapidly and so an averaging behavior is to be expected, whereby $N$ and $M$ would interact through the mean value of $M$ which, as argued above, is close to $A$. In other words, the asymptotic behavior of $N$ is expected to be close to that of $N'$, the length of the single-server PS queue with $A$ permanent customers. In fact, standard methods could be used to prove that $N$ and $N'$ have the same fluid limit; specifically, the process $(N(At)/A, t \geqslant 0)$ scaled both in time and space converges to the deterministic solution $(x(t), t \geqslant 0)$ to the ODE 
$$
\frac{\mathrm{d}x}{\mathrm{d}t} = \alpha - \mu \, \frac{x}{x+1},
$$
and its stationary distribution $N(\infty)/A$ converges to the unique stable point
\begin{equation}
x^* = \frac{\varrho}{1-\varrho}
\label{Defx*}
\end{equation}
of this ODE, with again $\varrho = \alpha/\mu < 1$.

In other words, in the heavy traffic regime when $A \to \infty$, the fluid behavior of $(N,M)$ is the same as that of $(N', M')$ and the main goal of this paper is to investigate to which extent this approximation holds in a Large Deviations setting.

\subsection{Main results}
In order to emphasize the dependency with respect to the scaling parameter $A$, let us denote by $\pmb{\Pi}_A$ the stationary distribution of $(N,M)$ when $\beta / \theta = A$ (recall that we let $A \to \infty$ while the four parameters $\alpha, \mu, \nu$ and $\theta$ remain fixed). It follows from the above discussion that the mass of distribution $\pmb{\Pi}_A$ is essentially concentrated around $(Ax^*, Ay^*)$ in the sense that $\pmb{\Pi}_A([A \underline x, A \bar x] \times [A \underline y, A \bar y]) \to 1$ when $A \to \infty$, for any $\underline x < x^* < \bar x$ and $\underline y < y^* < \bar y$. This regime therefore defines a Large Deviations setting for $\pmb{\Pi}_A$, whereby probabilities $\pmb{\Pi}_A(Ax,Ay)$ decrease exponentially for increasing $A$ and fixed 
$x \geqslant 0$, 
$y \geqslant 0$. 

The main result of the present paper is to establish sharp local asymptotics using the singular perturbation method, as discussed in more detail in Section~\ref{sec:asymptotics} below. In this framework, it is admitted that an expansion of the form
\begin{multline}
\pmb{\Pi}_A(Ax, Ay) = \frac{1}{2\pi A} \; \times 
\\
\exp \left [ -A \cdot H(x, y) - h_0(x, y) - 
\frac{h_1(x, y)}{A} - 
\frac{h_2(x, y)}{A^2} + O\left( \frac{1}{A^3} \right) \right ], \quad 
x, \; y > 0,
\label{AS0}
\end{multline}
exists for functions $H$ and $h_0$, $h_1$, $h_2$ satisfying some specific smoothness assumptions; these functions are then successively determined via the Kolmogorov equations. Note that $H$ in expansion (\ref{AS0}) is the usual decay function of the Large Deviations theory, defined by
$$
H(x,y) = - \, \lim_{A \to \infty} \frac{1}{A} \log \pmb{\Pi}_A(Ax, Ay).
$$
Our main result involves the functions $\Phi$, $\Psi$ and $g$ that will appear repeatedly in the sequel, and which are respectively defined by
\begin{align}
\Phi(x) & = x \log \left ( \frac{x}{\varrho} \right ) -(x+1)\log(x+1) - \log(1-\varrho), \qquad x \geqslant 0,
\label{eq:Phi}
\\
\Psi(y) & = y \log y - y + 1, \qquad y \geqslant 0,
\label{eq:Psi}
\end{align}
and
\[ g(x,y) = (1-\varrho) \sqrt{\frac{x+1}{x \, y}} \left ( \frac{x+1}{x+y} \right )^{\nu/\theta} \exp \left [ \frac{\mu}{\theta}(1-\varrho) \left ( \frac{x-x^*}{x+1} \right ) \log \left ( \frac{x+1}{x+y} \right ) \right ] \]
for $x, \; y > 0$ (recall that $x^*$ and $y^*$ have been defined in (\ref{Defx*}) and (\ref{Defy*})).

\begin{theorem}\label{T2}
	 Beside stability condition $\varrho < 1$, assume further that an asymptotic expansion of the form~\eqref{AS0} exists and satisfies the following smoothness conditions:
	\begin{enumerate}
		\item the functions $H$, $h_0$, $h_1$ and $h_2$ are respectively of class $\mathscr{C}^3$, $\mathscr{C}^2$, $\mathscr{C}^1$ and $\mathscr{C}^0$ in the open quarter-plane $\mathbb{R}^{+*} \times \mathbb{R}^{+*}$;
		\item the decay function $H$ is non negative, continuous over the closed quarter plane $\mathbb{R}^+ \times \mathbb{R}^+$, and satisfies $H(x^*,y^*) = 0$.
	\end{enumerate}
	Then as $A \to \infty$, we have
	\begin{equation}
	\pmb{\Pi}_A(Ax,Ay) \sim \frac{g(x,y)}{2\pi A} \, 
	e^{-A \cdot (\Phi(x)+\Psi(y))}
	\label{AsymptPA}
	\end{equation}
	for any $x, \; y > 0$.
\end{theorem}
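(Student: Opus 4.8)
The plan is to substitute the postulated asymptotic expansion~\eqref{AS0} into the stationarity equation $\pmb{\Pi}_A \Omega = 0$ written out in coordinates, and match terms order by order in powers of $A$. Concretely, the balance equation at state $(n,m) = (Ax, Ay)$ relates $\pmb{\Pi}_A(n,m)$ to $\pmb{\Pi}_A$ evaluated at the four neighbours $(n\pm 1, m)$ and $(n, m\pm 1)$. Using~\eqref{AS0}, a ratio such as $\pmb{\Pi}_A(n+1,m)/\pmb{\Pi}_A(n,m)$ expands as $\exp[-\partial_x H(x,y) - A^{-1}(\tfrac12\partial_{xx}H + \partial_x h_0) + O(A^{-2})]$, since shifting $n$ by $1$ shifts $x$ by $1/A$; similarly for the other neighbours, with $\beta = A\theta$ carrying an extra factor of $A$ in front of the impatient-arrival term. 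The first step is therefore to organize this expansion carefully, keeping track of which transition rates are $O(1)$ and which are $O(A)$, and of the $1/(n+m) = A^{-1}/(x+y)$ factors in the service rates.

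The leading order ($O(A)$, coming from the $\beta$-terms) gives an algebraic relation forcing a constraint on $\partial_x H$ and $\partial_y H$; I expect this to pin down $H$ as a separated sum and, combined with the next order, to yield the eikonal-type equation whose solution, together with the boundary data $H \geq 0$, $H(x^*,y^*)=0$ from hypothesis~(2), is $H(x,y) = \Phi(x) + \Psi(y)$ with $\Phi,\Psi$ as in~\eqref{eq:Phi}--\eqref{eq:Psi}. One checks directly that this $H$ is non-negative, $\mathscr{C}^\infty$ on the open quarter-plane, vanishes precisely at $(x^*,y^*)$ (note $\Phi(x^*)=0$ since $x^*=\varrho/(1-\varrho)$, and $\Psi(1)=0$), and that its gradient has the right structure — this is the selection argument that makes the formal solution the correct one. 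Then the next order ($O(1)$) produces a first-order linear transport PDE for $h_0$ along the characteristics determined by $\nabla H$; solving it (again a separation/integration, using the explicit $H$) gives $h_0$, and exponentiating $-h_0$ yields exactly the prefactor $g(x,y)$ — the $\sqrt{(x+1)/(xy)}$ factor coming from the "Hessian" contribution along each coordinate, the $((x+1)/(x+y))^{\nu/\theta}$ and the exponential correction coming from the cross term $\nu m/(n+m)$ in the $M$-departure rate. Since Theorem~\ref{T2} only claims the $\sim$ asymptotics, the functions $h_1$ and $h_2$ need not be computed; their existence (hypothesis~(1)) is only used to control the $O(A^{-1})$ remainder so that $e^{-h_1/A - h_2/A^2 + O(A^{-3})} \to 1$.

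The main obstacle, and the reason the paper proceeds by \emph{assuming} the expansion rather than proving it, is twofold. First, the two-time-scale structure means this is genuinely a \emph{singular} perturbation: the leading $O(A)$ balance is degenerate (it involves only the $M$-direction because only $\beta$ scales), so the naive WKB ansatz has to be set up so that the $H$-equation closes correctly across the two orders; getting the bookkeeping right near the axes and verifying that the separated form $\Phi(x)+\Psi(y)$ is consistent with \emph{all} the matched orders is delicate. Second — though this is outside what Theorem~\ref{T2} asks — rigorously justifying that a smooth expansion of the form~\eqref{AS0} exists (rather than, say, one with boundary layers or non-smooth $H$ along caustics) is exactly the hard analytic point that the hypotheses sidestep. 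Within the scope of the stated theorem, then, the argument is a finite computation: substitute, expand to three orders in $A^{-1}$, read off the eikonal equation and solve it using the boundary conditions to get $H=\Phi+\Psi$, read off the transport equation for $h_0$ and solve it to get $e^{-h_0}=g$, and conclude.
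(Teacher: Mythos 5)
Your overall framework (substitute the WKB-type expansion into the Kolmogorov equation, expand the neighbour ratios, match powers of $A$) is indeed the paper's, but the closure mechanism you describe is not the one that actually works, and this is the heart of the proof. The $O(A)$ balance determines only $\partial_y H$ (giving $\partial_y H=\log y$, hence $H(x,y)=\widetilde\Phi(x)+\Psi(y)$ with $\widetilde\Phi$ unknown), and the $O(1)$ balance, equation~\eqref{EQU1BIS}, is a \emph{single} equation containing two unknowns, $\widetilde\Phi'(x)$ and $\partial_y h_0$. There is no ``eikonal equation plus the boundary data $H\geqslant 0$, $H(x^*,y^*)=0$'' that pins down $\widetilde\Phi$: integrating \eqref{EQU1BIS} in $y$ produces a term proportional to $\log(y-1)$ in $h_0$ (see~\eqref{h0}), and it is the assumed smoothness of $h_0$ across the line $y=y^*=1$ that forces the coefficient of $\log(y-1)$ to vanish identically; this vanishing is the quadratic equation for $e^{\widetilde\Phi'(x)}$ whose nontrivial root gives $\Phi'$, while $H(x^*,y^*)=0$ only fixes the additive constant. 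Your proposal is missing this singularity-removal selection argument, and without it the hierarchy simply does not close.

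The same misidentification recurs one order down, and it makes your claim that ``$h_1$ and $h_2$ need not be computed; their existence is only used to control the remainder'' incorrect in an essential way. The $O(1)$ equation yields only $\partial_y h_0$; the $x$-dependence of $h_0$ (hence of the prefactor $g$) is obtained by pushing the expansion to order $1/A$, where one finds $K(x,y)\,\partial_x h_0-L(x,y)=\theta(1-y)\,\partial_y h_1$ (equation~\eqref{H0H1bis}). The assumed $\mathscr{C}^1$ regularity of $h_1$ across $y=1$ forces $\partial_x h_0(x,1)=L(x,1)/K(x,1)=1/[2x(x+1)]$, which is what determines $h_0$ — so $h_1$'s smoothness is a determining ingredient, not remainder control (and $h_2$'s presence is what makes the $1/A$-order matching legitimate). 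Finally, even after this, $h_0$ is known only up to an additive constant, i.e.\ $g$ up to a multiplicative constant; the paper fixes it by the normalization condition~\eqref{Kol1bis} together with the Laplace expansion~\eqref{Lapla0}, giving $e^{-C_0}=1-\varrho$. Your proposal omits this normalization step as well, so as written it could not produce the precise prefactor $g$ claimed in~\eqref{AsymptPA}.
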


The assumption on the continuity of $H$ over $\mathbb{R}^+ \times \mathbb{R}^+$ is motivated by the fact that these properties hold in the case when a large deviations principle (LDP) exists (this results from the lower semi-continuity of $H$~\cite[Chap.7, Sect.6]{Frei12}, together with the existence of an attained infimum for the action functional on any closed subset~\cite[p.81]{Frei12}). Focusing on the decay function $H$, Theorem \ref{T2} has the following consequence.

\begin{theorem}\label{T1}
	Under the same assumptions as that of Theorem~\ref{T2}, the decay rate $H$ of distribution $\pmb{\Pi}_A$ equals the sum
	$$
	H(x,y) = \Phi(x) + \Psi(y)
	$$
	for all $x,\; y \geqslant 0$.
\end{theorem}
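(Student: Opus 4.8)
The plan is to deduce Theorem~\ref{T1} directly from the sharp local asymptotics~\eqref{AsymptPA} of Theorem~\ref{T2}, the only genuine work being the treatment of the boundary of the quarter-plane.

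First I would fix $x, y > 0$. By definition, $H(x,y) = -\lim_{A\to\infty} A^{-1}\log \pmb{\Pi}_A(Ax,Ay)$. The equivalence~\eqref{AsymptPA} gives $\log \pmb{\Pi}_A(Ax,Ay) = \log\bigl(g(x,y)/(2\pi A)\bigr) - A\,(\Phi(x)+\Psi(y)) + o(1)$ as $A\to\infty$; dividing by $-A$ and letting $A\to\infty$, the contribution of the prefactor term $A^{-1}\log\bigl(g(x,y)/(2\pi A)\bigr) = A^{-1}\log g(x,y) - A^{-1}\log(2\pi A)$ vanishes, so that $H(x,y) = \Phi(x)+\Psi(y)$ on the open quarter-plane $\mathbb{R}^{+*}\times\mathbb{R}^{+*}$.

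It then remains to extend the identity to the boundary $\{x=0\}\cup\{y=0\}$. Here I would invoke assumption~2 of Theorem~\ref{T2}, namely that $H$ is continuous on the closed quarter-plane $\mathbb{R}^+\times\mathbb{R}^+$, together with the elementary observation that $\Phi$ and $\Psi$, as defined in~\eqref{eq:Phi}--\eqref{eq:Psi}, are continuous on $[0,\infty)$ (with $\Phi(0) = -\log(1-\varrho)$ and $\Psi(0) = 1$, since $t\log t\to 0$ as $t\downarrow 0$). Since $H$ and $(x,y)\mapsto \Phi(x)+\Psi(y)$ are two continuous functions on $\mathbb{R}^+\times\mathbb{R}^+$ that coincide on the dense subset $\mathbb{R}^{+*}\times\mathbb{R}^{+*}$, they coincide everywhere, which yields the claimed formula for all $x,y\geqslant 0$.

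As a consistency check one verifies that the normalization $H(x^*,y^*)=0$ imposed in assumption~2 is automatically satisfied: with $y^*=1$ one has $\Psi(1)=0$, and with $x^*=\varrho/(1-\varrho)$ one computes $x^*/\varrho = x^*+1 = 1/(1-\varrho)$ and $x^* - 1/(1-\varrho) = -1$, whence $\Phi(x^*) = \log\bigl(1/(1-\varrho)\bigr)\cdot(-1) - \log(1-\varrho) = 0$. The only point requiring any care in the argument is thus the passage to the boundary, and this is handled entirely by the continuity hypothesis on $H$; no further analysis of the Kolmogorov equations is needed beyond what already underlies Theorem~\ref{T2}.
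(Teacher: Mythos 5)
Your argument is circular relative to this paper's logical structure. You deduce Theorem~\ref{T1} from the sharp asymptotics~\eqref{AsymptPA} of Theorem~\ref{T2}, but here Theorem~\ref{T2} has no proof independent of Theorem~\ref{T1}: the identity $H=\Phi+\Psi$ is established \emph{first}, and the prefactor $g$ of Theorem~\ref{T2} is obtained afterwards by pushing the same expansion one order further, using the already-determined $H$ and $h_0$. The actual content of Theorem~\ref{T1} is the order-by-order analysis of the Kolmogorov equation~\eqref{Kol1} under the expansion hypothesis~\eqref{Dev0}: at order $O(A)$ one gets the Hamilton--Jacobi relation $\theta+\theta y=\theta e^{\partial_y H}+\theta y\,e^{-\partial_y H}$, whose non-trivial root gives $\partial_y H=\log y$ and hence $H(x,y)=\widetilde\Phi(x)+\Psi(y)$; at order $O(1)$ one solves for $\partial h_0/\partial y$, and the assumed smoothness of $h_0$ across the line $y=y^*=1$ forces the coefficient of $\log(y-1)$ to vanish, which yields the quadratic equation $\alpha e^{2\widetilde\Phi'}-(\alpha+\mu x/(x+1))e^{\widetilde\Phi'}+\mu x/(x+1)=0$, hence $e^{\widetilde\Phi'(x)}=x/(\varrho(x+1))$, and integration together with $H(x^*,y^*)=0$ identifies $\widetilde\Phi$ with $\Phi$ of~\eqref{eq:Phi}. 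None of this appears in your proposal; by citing Theorem~\ref{T2} you assume exactly what has to be proven, so the proof as written has a genuine gap (it would only be valid if Theorem~\ref{T2} admitted a proof not passing through Theorem~\ref{T1}, which is not the case here).

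The self-contained parts of your argument are fine but peripheral: the passage from~\eqref{AsymptPA} to the logarithmic limit on the open quadrant is immediate, the verification $\Phi(x^*)=\Psi(y^*)=0$ is correct, and your extension to the boundary via the continuity of $H$ on the closed quarter-plane together with the continuity of $(x,y)\mapsto\Phi(x)+\Psi(y)$ is precisely the paper's own concluding step. What must replace the appeal to Theorem~\ref{T2} is the derivation of $H$ on the open quadrant from the $O(A)$ and $O(1)$ terms of~\eqref{Kol1} and the smoothness of $h_0$ across $y=1$.
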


\begin{figure}[h]
\scalebox{0.75}{\includegraphics[width=9cm, trim = 0cm 5cm 8cm 3.5cm,clip]
{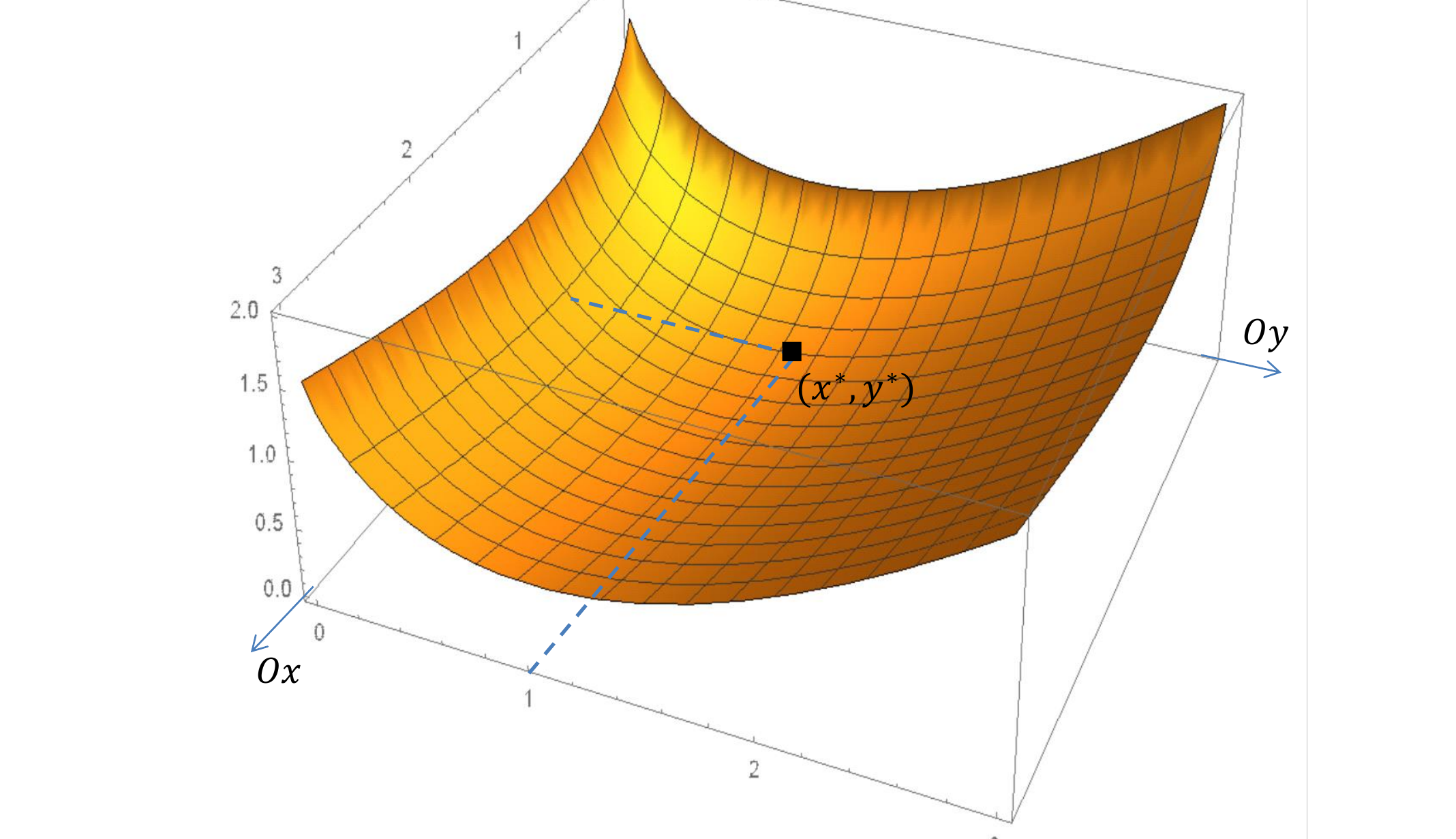}}
\scalebox{0.72}{\includegraphics[width=8cm, trim = 0cm 5cm 12cm 4cm,clip]
{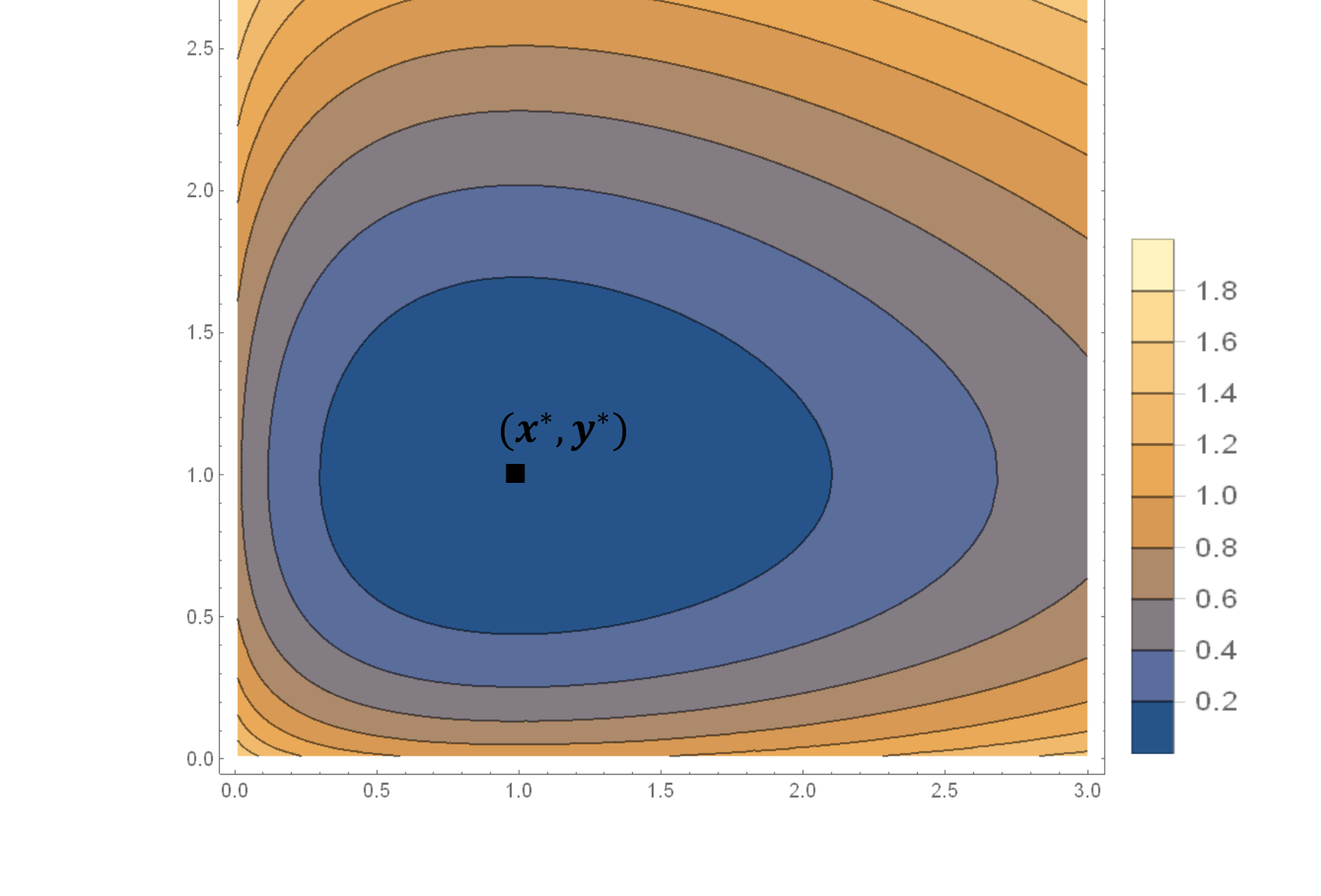}}
\caption{Surface $z = H(x,y)$ and level curves $H(x,y) = \textnormal{\textrm{constant}}$.}
\label{Fig12}
\end{figure}

\noindent
For illustration (see Figure~\ref{Fig12}), the convex surface $z = H(x,y)$ in the $(x,y,z)$-space is plotted for $\varrho = 0.5$. We then have $(x^*,y^*) = (1,1)$ and, in particular, $H(0,0) \approx 1.69$, $H(3,0) \approx 1.52$, $H(0,3) \approx 1.99$. The level curves $H(x,y) = \textnormal{\textrm{constant}}$ in the positive quadrant are also depicted.

Theorem~\ref{T1} thus asserts that distribution $\pmb{\Pi}_A$ is asymptotically the product of two marginal distributions in the logarithmic order. Actually, the components $\Phi$ and $\Psi$ that appear are exactly those of the processes $N'$ and $M'$ introduced earlier, that is, $\Phi$ is the decay rate of the single-server PS queue with input rate $\alpha$, $A$ permanent customers and service rate $\mu$ (this will be proved in Appendix~\ref{PL1bis}), and $\Psi$ is the decay rate of the $M/M/\infty$ queue with input rate $A \theta$ and service rate $\theta$~\cite[Chap.5, p.160]{Frei12}. This result therefore shows that the approximation $(N, M) \approx (N', M')$ remains accurate in the logarithmic order for large deviations. However, Theorem~\ref{T2} shows that this approximation breaks down in the usual, say $O(1)$, order because function $g$ in~\eqref{AsymptPA} does not factorize into the product of two functions of $x$ and $y$. 

The next result shows that this independence property in the logarithmic order is enough to imply independence of centered and scaled stationary distributions.

\begin{theorem} 
Under the same assumptions as that of Theorem~\ref{T2}, the centered pair
\[ (\xi_A,\eta_A) = \sqrt{A} \left ( \frac{N(\infty)}{A} - x^*, 
\frac{M(\infty)}{A} - y^* \right) \]
converges weakly as $A \to \infty$ towards the centered Gaussian variable $(\xi,\eta)$ with covariance structure
\[ 
\mathbb{E}(\xi^2) = \frac{\varrho}{(1-\varrho)^2}, \quad 
\mathbb{E}(\eta^2) = 1, \quad \mathbb{E}(\xi \eta) = 0.
\]
Moreover, we have
$$
\mathbb{E}(N(\infty)) \sim A \, x^*, \qquad \mathbb{E}(M(\infty)) \sim A
$$
for large $A$.
\label{T0}
\end{theorem}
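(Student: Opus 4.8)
The plan is to upgrade the pointwise estimate of Theorem~\ref{T2} into a local central limit theorem at the diffusive scale around $(A x^*, A y^*)$. First I would record the second-order data of the rate function at its minimum. Differentiating \eqref{eq:Phi} gives $\Phi'(x)=\log\frac{x}{\varrho(x+1)}$ and $\Phi''(x)=\frac{1}{x(x+1)}$, so that $\Phi(x^*)=\Phi'(x^*)=0$ and $\Phi''(x^*)=\frac{(1-\varrho)^2}{\varrho}$; likewise $\Psi(y^*)=\Psi'(y^*)=0$ and $\Psi''(y^*)=1$ from \eqref{eq:Psi}. Evaluating the prefactor gives $g(x^*,y^*)=(1-\varrho)/\sqrt{\varrho}$, since the exponential factor in $g$ vanishes at $x=x^*$ and the ratio $(x^*+1)/(x^*+y^*)$ equals $1$. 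Writing $n=\lfloor A x^*+\sqrt{A}\,u\rfloor$ and $m=\lfloor A y^*+\sqrt{A}\,v\rfloor$, a Taylor expansion together with Theorem~\ref{T2} then yields, as $A\to\infty$ and for every fixed $(u,v)$, the convergence
\[
A\cdot\pmb{\Pi}_A(n,m)\ \longrightarrow\ \frac{1-\varrho}{2\pi\sqrt{\varrho}}\,\exp\!\left(-\frac{(1-\varrho)^2}{2\varrho}\,u^2-\frac{1}{2}\,v^2\right),
\]
which is precisely the value at $(u,v)$ of the density of the centered Gaussian vector with covariance $\diag\!\big(\varrho/(1-\varrho)^2,\,1\big)$. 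Note that the cross term is absent exactly because $H=\Phi+\Psi$ separates the two variables (Theorem~\ref{T1}); the non-factorisation of $g$ highlighted after Theorem~\ref{T1} is invisible at this scale, $g$ contributing only through the constant $g(x^*,y^*)$, which is also what normalises the Gaussian density correctly.

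It remains to turn this local convergence into the weak convergence $(\xi_A,\eta_A)\Rightarrow(\xi,\eta)$, for which I would use two ingredients. The first is local uniformity: since the expansion \eqref{AS0} with its smoothness hypotheses produces an error term that is uniform on compact subsets of $\mathbb{R}^{+*}\times\mathbb{R}^{+*}$, the display above holds uniformly for $(u,v)$ in compact sets, so that a Riemann-sum argument converts $\sum_{(n,m)}$ over any fixed box (in rescaled coordinates) into the corresponding Gaussian integral over that box. The second is tightness. For $\eta_A$ this is elementary: comparing death rates shows $M(\infty)\le_{\mathrm{st}}M'(\infty)$, a Poisson variable with mean $A$, while $M(\infty)\ge_{\mathrm{st}}$ the stationary length of the birth--death chain with birth rate $\beta$ and death rate $\nu\mathbf{1}_{\{m\ge1\}}+\theta m$; both bounds, rescaled by $\sqrt A$ and centred at $y^*=1$, converge to $\mathcal N(0,1)$, so $(\eta_A)$ is even sandwiched. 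For $\xi_A$ there is no such clean coupling, but tightness follows by combining the fluid-scale concentration stated before Theorem~\ref{T2} (which makes the mass outside $\{|n/A-x^*|<\delta\}$ exponentially small) with the uniform-on-compacts upper bound from \eqref{AS0}: on $\{K\sqrt A<|n-Ax^*|\}\cap\{|n/A-x^*|<\delta\}$ the quadratic lower bound $\Phi(x)\ge c\,(x-x^*)^2$ together with summation over $m$ bounds the mass by $O(e^{-c'K^2})$, uniformly in $A$. The same estimate bounds the mass outside any box $[-K,K]^2$; combined with the Riemann-sum step and the usual truncation, this gives $\mathbb{E}[f(\xi_A,\eta_A)]\to\int f\,\mathrm{d}\mathcal N$ for every bounded continuous $f$. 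I expect this $\xi_A$-tightness to be the main obstacle: near the $\sqrt A$ scale the drift of $N$ is asymptotically zero (the fixed point $x^*$ is exactly the solution of $\mu x/(x+1)=\alpha$), so the confinement is genuinely diffusive and one must invoke the sharp estimate of Theorem~\ref{T2} rather than a crude Foster--Lyapunov bound.

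Finally, the first-moment asymptotics. The drift condition $\Omega(m)\le\beta-\theta m$ first guarantees $\mathbb{E}(M(\infty))<\infty$; then, applying the generator $\Omega$ to $f(n,m)=n$ and using stationarity gives $\mathbb{E}[\mu N(\infty)/(N(\infty)+M(\infty))]=\alpha$, hence $\mathbb{E}[N(\infty)/(N(\infty)+M(\infty))]=\varrho$, while applying $\Omega$ to $f(n,m)=n+m$ gives $\theta\,\mathbb{E}(M(\infty))=\alpha+\beta-\mathbb{E}[(\mu N(\infty)+\nu M(\infty))/(N(\infty)+M(\infty))]=\beta-\nu(1-\varrho)$, so that $\mathbb{E}(M(\infty))=A-\nu(1-\varrho)/\theta\sim A$ exactly. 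For $N$, the weak convergence obtained above gives $N(\infty)/A\to x^*$ in probability, so it suffices to check that $(N(\infty)/A)_A$ is uniformly integrable; this follows from a Foster--Lyapunov argument on the joint chain, using that once $N/A>x^*(1+\delta)$ while $M/A$ stays near $1$ the departure rate $\mu N/(N+M)$ exceeds the arrival rate $\alpha$, which confines $N(\infty)/A$ with tails that are exponential uniformly in $A$. Vitali's theorem then yields $\mathbb{E}(N(\infty))=A\,\mathbb{E}(N(\infty)/A)\sim A x^*$. Throughout, the boundary $\{n=0\}\cup\{m=0\}$ carries exponentially small mass and is negligible.
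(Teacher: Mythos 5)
Your route is genuinely different from the paper's and is, in outline, viable. The paper proceeds through the generating function: analytic continuation of $\mathrm{F}_A$ (Lemma~\ref{L0}), then a two-dimensional Laplace evaluation of $\mathrm{F}_A(\UU,\VV)$ near $(1,1)$ based on Theorem~\ref{T2} (Proposition~\ref{P3}), then L\'evy's continuity theorem for the CLT, and finally the moment estimates by integrating the marginal asymptotics of Corollary~\ref{C1} with the Laplace expansion~\eqref{Lapla0}. You instead read Theorem~\ref{T2} as a local limit theorem at the $\sqrt{A}$ scale and sum over boxes; your second-order data ($\Phi''(x^*)=(1-\varrho)^2/\varrho$, $\Psi''(1)=1$, $g(x^*,y^*)=(1-\varrho)/\sqrt{\varrho}$) are correct and the limiting Gaussian density comes out correctly normalised. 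Two remarks on that part: once the local convergence is uniform on compacts, tightness is automatic (the limit density has total mass $1$, so fixed large boxes capture mass arbitrarily close to $1$), hence the sandwich coupling for $\eta_A$ and the separate $\xi_A$-tail discussion are dispensable; and your uniform-on-compacts reading of~\eqref{AS0} is comparable to what the paper itself uses when it substitutes~\eqref{AsymptPA} into integrals and invokes Remark~\ref{Runif}, so you are not assuming more than the paper there.

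Two points in the moment part need repair. First, with the convention $0/0=0$, rate conservation gives $\mathbb{E}\bigl[\mu N(\infty)/(N(\infty)+M(\infty))\bigr]=\alpha$, but then $\mathbb{E}\bigl[M(\infty)/(N(\infty)+M(\infty))\bigr]=1-\varrho-\pmb{\Pi}_A(0,0)$, not $1-\varrho$; hence $\theta\,\mathbb{E}(M(\infty))=\beta-\nu\bigl(1-\varrho-\pmb{\Pi}_A(0,0)\bigr)$, so your identity is not exact (though $\mathbb{E}(M(\infty))\sim A$ is of course unaffected), and the stationarity identities $\mathbb{E}[\Omega f]=0$ for unbounded $f$ should be justified by truncation, using $M(\infty)\le_{\mathrm{st}}\mathrm{Poisson}(A)$. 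Second, and more substantially, the uniform integrability of $N(\infty)/A$ is the one genuinely unproven step in your argument: the drift of $N$ is negative only on $\{N>x^*M\}$, so a Foster--Lyapunov bound uniform in $A$ must control the joint excursions of $(N,M)$ (a function of $N-x^*M$, or of $z^Nw^M$, rather than of $N$ alone), and the compact-uniform version of~\eqref{AS0} gives no control in the far tail $x\to\infty$. The paper bypasses this by integrating Corollary~\ref{C1}; within its framework you could also close the gap via Proposition~\ref{P3}, which yields $\mathbb{E}\bigl(z^{N(\infty)}\bigr)\sim\bigl((1-\varrho)/(1-\varrho z)\bigr)^A G_0(z,1)$ for $1<z<1/\varrho$ and hence a Chernoff bound giving exactly the uniform integrability you need.
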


Theorem~\ref{T0} implies, in particular, that the scaled pair $(N(\infty)/A, M(\infty)/A)$ converges weakly to the deterministic point $(x^*,y^*)$, as was alluded to before. Besides, the asymptotic distribution of $(\xi,\eta)$ has zero covariance, so that its components are asymptotically independent, although $N(\infty)$ and $M(\infty)$ are dependent for finite~$A$. In fact, $(\xi, \eta)$ is the limit of the centered and scaled stationary distribution of $(N', M')$, showing that the approximation $(N, M) \approx 
(N', M')$ still holds for fluctuations of stationary distributions around their deterministic limits
(this is verified in Appendix \ref{PL1bis}, Remark \ref{Rs}, for the variable $\xi$; on the other hand, this readily follows from the Gaussian approximation of the Poisson distribution of the $M/M/\infty$ queue for the variable $\eta$).

\subsection{Application to mobile networks}

Numerous models of multi-class PS queues with impatience have been investigated in the queueing literature. The single-class PS queue with impatience has been early dealt with to derive asymptotics for the stationary queue distribution~\cite{Coff94}. In the context of radio communication networks, the multi-class case when all classes are impatient has been addressed for the control of early customer departure in the overload regime~\cite{GUI15}. More recently, this multi-class queue has been invoked for the performance of radio networks when accounting for spatial mobility~\cite{SPRI18, POAS16, SIM20}. In this context, impatience is used to model mobility, as both impatience and mobility make customers leave the system independently of the service received.

This last stream of results is actually one of our motivation for investigating Theorem~\ref{T1}.
\begin{figure}[t]
	\begin{center}
		\begin{tikzpicture}[block/.style={
			text width={width("Patient S-customers")+2pt}},
			align=center]
			\setlength{\tmpln}{25mm}
			\setlength{\tmplnn}{10mm}
			\setlength{\tmplnnn}{.1\tmpln}
			\draw (0,0) -- ++ (\tmpln,0) -- ++ (0,\tmplnn) -- ++ (-\tmpln,0);
			\foreach \x in {0,1,...,5}{
				\draw [fill=red!50] (\tmpln-\x*\tmplnnn,0) rectangle ++ (-\tmplnnn, \tmplnn);}
			\foreach \x in {2,3,5}{
				\draw [fill=blue!50] (\tmpln-\x*\tmplnnn,0) rectangle ++ (-\tmplnnn, \tmplnn);}
				\draw [fill=green!50] (\tmpln+12pt,.5\tmplnn) circle (10pt) coordinate (server);
				\node [single arrow, draw=black, rotate=-30, fill=blue!50, minimum height=15mm] (fleche-S) at (-1.5, \tmplnn+5) {};
				\node [block] at ($(fleche-S)+(-2.5,0)$) {S-customers, exogenous flow};
				\node [single arrow, draw=black, rotate=00, fill=red!50, minimum height=15mm] (fleche-M) at (-1.5, -0) {};
				\node [text width={width("Patient S-customers")+2pt}] at ($(fleche-M)+(-2.5,-.2)$) {M-customers, exogenous flow at rate $\beta_\texttt{ex}$};
				\node [single arrow, draw=black, thick, minimum height=15mm] (fleche-D) at (\tmpln + 22pt + 10mm, .5 \tmplnn) {};
				\node [text width={width("(after either service completion")+2pt}, align=center] at ($(fleche-D)+(0,+1)$) {Output flow after service completion};
				\node at (.5\tmpln,-.5) {PS queue};
				\node [single arrow, draw=black, thick, minimum height=10mm, dashed, rotate=-30] at (\tmpln + 10pt + 10mm, -.5\tmplnn) {};
				
				\draw [line width=5pt, draw=red!50, >=stealth, ->] (\tmpln+50pt, -\tmplnn) to [out=-50,in=-130] (-.5,-.5);
				\node [text width={width("M-customers, feedback")+10pt}, align=center] at (.65\tmpln,-80pt) {M-customers, feedback flow due to impatience at rate $\beta_\net = \theta \mathbb{E}{M(\infty)}$};
		\end{tikzpicture}
	\end{center}
\caption{Multi-class closed-loop PS queue.}
\label{PSqueueBIS0}
\end{figure}
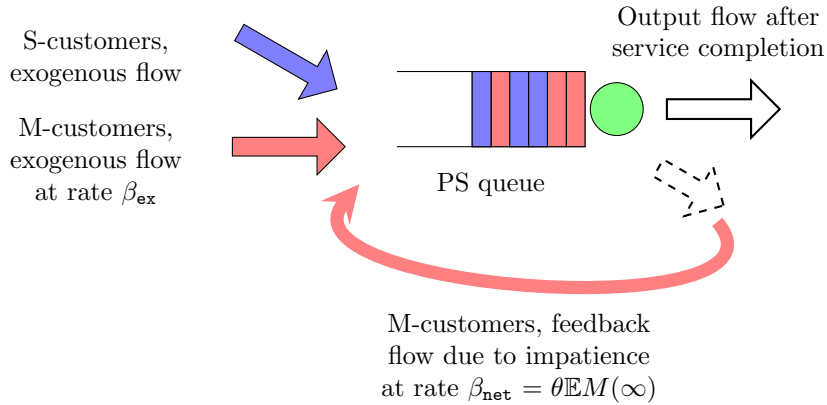
These papers consider the process $(N,M)$ above, with $N$ the number of Static (patient) customers, and $M$ the number of Moving (impatient) customers in the considered radio cell: a departure of an $M$-customer is thus either due to a service completion, or to a spatial movement to another neighboring cell of the network, the latter happening in stationarity at rate $\theta \; \mathbb{E}(M(\infty))$.

In order to account for possible reverse movements of users to the considered cell in the network but outside the cell, the authors of~\cite{SPRI18, POAS16, SIM20} consider the so-called \textit{closed-loop} Processor-Sharing queue (see Figure~\ref{PSqueueBIS0}). In the latter, the arrival rate of $M$-customers is decomposed as
$$
\beta = \beta_{\texttt{ex}} + \beta_{\net},
$$
$\beta_{\texttt{ex}}$ representing the rate of exogenous arrivals and $\beta_{\net}$ the rate of arrivals within the network. The rate $\beta_\texttt{ex}$ is fixed, while the rate $\beta_\net$ is obtained by imposing a balance condition. In fact, the authors consider the case of a balanced cell where the movements of mobile users within the network from and to the cell balance each other, that is, $\beta_\net$ is equal to the rate $\theta \; \mathbb{E}(M(\infty))$ of customers moving out of the cell. This balance condition is captured by the equation
\begin{equation}
\theta \cdot \mathbb{E}(M(\infty)) = \beta_{\net}. 
\label{FPEqu}
\end{equation}
Since $\mathbb{E}(M(\infty))$ is itself a function of $\beta_{\net}$, (\ref{FPEqu}) is a Fixed-Point equation. It has been proved~\cite[Proposition 3.1]{SPRI18} that this Fixed-Point equation has a unique solution if and only if 
$$
\varrho_\tot := \frac{\alpha}{\mu} + \frac{\beta_{\texttt{ex}}}{\nu} < 1, 
$$
meaning that the total load imposed by exogenous arrivals is smaller than the cell capacity. When it is enforced, this defines a Markov process $(N_\mob, M_\mob)$ which is a particular case of the above $(N,M)$ process with a parameter $\beta$ specifically chosen as an implicit function of other parameters $\alpha, \beta_\texttt{ex}, \mu$ and $\nu$, that is, $\beta = \beta_\texttt{ex} + \beta_\net$ with $\beta_\net$ determined by Fixed-Point equation (\ref{FPEqu}).

In~\cite{SIM20}, this Markov process $(N_\mob, M_\mob)$ is studied in the heavy traffic regime $\varrho_\tot \uparrow 1$: this makes the rate $\beta_\net$ of inner movements grow large and it thus amounts to studying the $(N,M)$ process in the regime $A \to \infty$. It is proved there, in particular, that the stationary distribution remarkably grows as the logarithm of $1/(1-\varrho_\tot)$, a very peculiar result in sharp contrast with the usual $1/(1-\varrho_\tot)$ growth in heavy traffic. More precisely, the authors show that the random sequence $(N_\mob(\infty), M_\mob(\infty))/\log(1/(1-\varrho_\tot))$ is tight when $\varrho_\tot \uparrow 1$, that any accumulation point is larger than $(x^*, y^*)$ and they conjecture that this lower bound is actually the exact limit. As argued in~\cite{SIM20}, proving this requires to prove that 
$$
-\frac{1}{A} \log \pmb{\Pi}_A(0,0) \longrightarrow H(0,0) = 1-\log(1-\varrho)
$$
when $A \to \infty$, which is a direct consequence of Theorem~\ref{T1} in the framework of the present singular perturbation setting. 

It is proved in~\cite{SIM20} that $\mathbb{P}(N_\mob(\infty) = M_\mob(\infty) = 0) = 1 - \varrho_\tot$ so that as $\varrho_\tot \uparrow 1$, we have $\beta \to \infty$ in such a way that $A \sim - \log(1-\varrho_\tot) / H(0,0)$. A direct application of Theorem~\ref{T1} to the 
$(N_\mob, M_\mob)$ process then enables us to state the following.

\begin{theorem}
	Suppose $\varrho < 1$ and that the assumptions of Theorem~\ref{T2} hold. We further let 
	$$
	A_\mob = - \, \frac{\log(1-\varrho_\tot) }{H(0,0)}
	$$
	with $H(0,0) = 1 - \log(1-\varrho)$. As $\varrho_\tot \uparrow 1$, the centered pair
%
%
%
$$
\sqrt{A_\mob} \, \left ( \frac{N_\mob(\infty)}{A_\mob} - x^*, \frac{M_\mob(\infty)}{A_\mob} - y^* \right )
$$
converges weakly to the same Gaussian variable $(\xi, \eta)$ as that of Theorem~\ref{T0}. Moreover, the mean queue occupancies grow logarithmically as
$$
\mathbb{E}(N_\mob(\infty)) \sim 
A_\mob \, x^* 
\qquad \text{ and } \qquad
\mathbb{E}(M_\mob(\infty)) \sim 
A_\mob
$$
when $\varrho_\tot \uparrow 1$, with again $x^*= \varrho/(1-\varrho)$ and $y^* = 1$.
\label{T4}
\end{theorem}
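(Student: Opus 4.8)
The plan is to transfer the conclusions of Theorem~\ref{T0} from the generic process $(N,M)$ with scaling parameter $A\to\infty$ to the closed-loop process $(N_\mob,M_\mob)$ as $\varrho_\tot\uparrow 1$, the only subtlety being that here $A$ is not a free parameter but an implicit function of $\varrho_\tot$ through the Fixed-Point equation~\eqref{FPEqu}. First I would recall the identity $\mathbb{P}(N_\mob(\infty)=M_\mob(\infty)=0)=1-\varrho_\tot$ established in~\cite{SIM20}, and combine it with Theorem~\ref{T1} (applied at $(x,y)=(0,0)$), which gives
$-\tfrac{1}{A}\log\pmb{\Pi}_A(0,0)\to H(0,0)=\Phi(0)+\Psi(0)=1-\log(1-\varrho)$.
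Writing $A=\beta/\theta$ for the scaling parameter associated with the solution $\beta=\beta_\texttt{ex}+\beta_\net$ of the Fixed-Point equation, and noting that $\varrho_\tot\uparrow 1$ forces $\beta_\net\to\infty$ hence $A\to\infty$, the two displays combine to give
$\log(1-\varrho_\tot)=\log\pmb{\Pi}_A(0,0)=-A\,H(0,0)\,(1+o(1))$,
that is $A\sim -\log(1-\varrho_\tot)/H(0,0)=A_\mob$ as $\varrho_\tot\uparrow 1$. This is the key bridge: asymptotically, the closed-loop process at parameter $\varrho_\tot$ behaves like the generic process at scaling level $A_\mob$.

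Next I would invoke Theorem~\ref{T0} directly for the process $(N,M)$ at its scaling parameter $A$. Since $(N_\mob,M_\mob)$ \emph{is} the process $(N,M)$ for this particular (implicitly determined) value of $\beta$, Theorem~\ref{T0} immediately yields that
$\sqrt{A}\,(N_\mob(\infty)/A-x^*,\ M_\mob(\infty)/A-y^*)$
converges weakly to the centered Gaussian $(\xi,\eta)$ with the stated covariance structure, and that $\mathbb{E}(N_\mob(\infty))\sim A\,x^*$, $\mathbb{E}(M_\mob(\infty))\sim A$. It then remains to replace $A$ by $A_\mob$ throughout. For the means this is immediate from $A\sim A_\mob$: $\mathbb{E}(N_\mob(\infty))\sim A x^*\sim A_\mob x^*$ and likewise for $M_\mob$. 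For the weak convergence one argues that since $A/A_\mob\to 1$, the maps $(n,m)\mapsto\sqrt{A_\mob}(n/A_\mob-x^*,m/A_\mob-y^*)$ and $(n,m)\mapsto\sqrt{A}(n/A-x^*,m/A-y^*)$ applied to $(N_\mob(\infty),M_\mob(\infty))$ differ by a term that tends to $0$ in probability — indeed the difference is
$(\sqrt{A_\mob}-\sqrt{A})\,x^* + (\sqrt{A_\mob}/A_\mob-\sqrt{A}/A)\,N_\mob(\infty)$ in the first coordinate, and $N_\mob(\infty)/A=O_{\mathbb P}(1)$ while $\sqrt{A_\mob}-\sqrt{A}=o(\sqrt{A})\cdot o(1)$ and $\sqrt{A_\mob}/A_\mob-\sqrt{A}/A=o(1/\sqrt{A})$, so both pieces vanish — and an analogous bound holds for the second coordinate. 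Slutsky's lemma then transfers the weak limit from the $A$-normalization to the $A_\mob$-normalization, giving the claimed convergence to $(\xi,\eta)$.

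I expect the main (and essentially only) obstacle to be making the passage $A\sim A_\mob$ fully rigorous, since $A$ is defined only implicitly: one must be careful that the $o(1)$ in $-\tfrac1A\log\pmb{\Pi}_A(0,0)=H(0,0)+o(1)$ is uniform enough, as $A$ itself depends on $\varrho_\tot$. This is handled by noting that $\varrho_\tot\uparrow 1$ is monotonically equivalent to $A\uparrow\infty$ (the Fixed-Point map being increasing in $\beta_\net$ by~\cite[Proposition~3.1]{SPRI18}), so the convergence $-\tfrac1A\log\pmb{\Pi}_A(0,0)\to H(0,0)$ along $A\to\infty$ can be composed with $A=A(\varrho_\tot)\to\infty$ without any uniformity issue. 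Once $A\sim A_\mob$ is secured, everything else is a routine rescaling via Slutsky's lemma, so there is no substantive new difficulty beyond what is already contained in Theorems~\ref{T1} and~\ref{T0}.
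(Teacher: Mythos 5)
Your overall route is the same as the paper's: use the identity $\mathbb{P}(N_\mob(\infty)=M_\mob(\infty)=0)=1-\varrho_\tot$ from~\cite{SIM20} together with the decay of $\pmb{\Pi}_A(0,0)$ at rate $H(0,0)$ (Theorem~\ref{T1}) to identify $A\sim A_\mob$ as $\varrho_\tot\uparrow 1$, then apply Theorem~\ref{T0} to $(N_\mob,M_\mob)$, which is the $(N,M)$ process for the implicitly determined $\beta$, and finally replace $A$ by $A_\mob$. The assertions $\mathbb{E}(N_\mob(\infty))\sim A_\mob x^*$ and $\mathbb{E}(M_\mob(\infty))\sim A_\mob$ indeed need only $A\sim A_\mob$ and are fine.

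The gap is in the Slutsky step transferring the central limit theorem from the $A$-normalization to the $A_\mob$-normalization. Writing $N_\mob(\infty)=A\,x^*+\sqrt{A}\,\xi_A$ with $\xi_A\Rightarrow\xi$, one has
\[
\sqrt{A_\mob}\left(\frac{N_\mob(\infty)}{A_\mob}-x^*\right)=\sqrt{\frac{A}{A_\mob}}\;\xi_A\;+\;x^*\,\frac{A-A_\mob}{\sqrt{A_\mob}},
\]
so the transfer requires the deterministic recentering term to vanish, i.e.\ $A-A_\mob=o(\sqrt{A_\mob})$ (and similarly for the second coordinate with $y^*=1$). The purely logarithmic equivalence $-\log(1-\varrho_\tot)=A\,H(0,0)(1+o(1))$ that Theorem~\ref{T1} provides only gives $A-A_\mob=o(A_\mob)$, and your estimates are incorrect at exactly this point: from $A\sim A_\mob$ alone, $\sqrt{A_\mob}-\sqrt{A}=o(\sqrt{A})$ and $\bigl(1/\sqrt{A_\mob}-1/\sqrt{A}\bigr)N_\mob(\infty)=o(\sqrt{A})\,O_{\mathbb P}(1)$, so neither piece of your decomposition tends to $0$; their sum is exactly the recentering term above, which can diverge. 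To close the argument you need a prefactor-level (not merely logarithmic) estimate of $\pmb{\Pi}_A(0,0)$, for instance $\pmb{\Pi}_A(0,0)=\exp\bigl(-A\,H(0,0)+O(\log A)\bigr)$, which in the paper's framework comes from the sharp boundary asymptotics sketched in the Remark following Corollary~\ref{C1}; this yields $A-A_\mob=O(\log A_\mob)=o(\sqrt{A_\mob})$, after which your Slutsky argument does go through. The paper itself treats the replacement of $A$ by $A_\mob$ as immediate, but your write-up explicitly asserts the vanishing of terms that do not vanish under the sole equivalence $A\sim A_\mob$, so the step as you justify it fails and the sharper control of $A-A_\mob$ is the missing ingredient.
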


%

The latter estimates of the mean queue occupancies enable us to derive asymptotics for the average throughput of each customer class. Seeing the workload brought by each arriving customer as a data volume to be transferred through a communication link (server) with total transmission capacity $C$, the mean throughput can be defined as the ratio of the mean volume of transferred data to the mean transfer time of a given customer~\cite[Section 2.1]{POAS16}. Normalizing the server capacity $C$ to unity and using the general expressions of~\cite[Prop.2.2]{POAS16}, the efficient throughputs $\gamma$ and $\Gamma$ of class \textit{S} (Static) and \textit{M} (Moving) customer flows can then be readily expressed by
$$
\gamma = \frac{\varrho}{\mathbb{E}(N_\mob(\infty))}, \qquad 
\Gamma = \frac{1}{\mathbb{E}(M_\mob(\infty))}
\left ( \varrho_\tot- \varrho + \frac{\beta_\net}{\nu} \right ) - \frac{\theta}{\nu},
$$
respectively, where rate $\beta_\net$ is defined by (\ref{FPEqu}). 
As $\varrho_\tot \uparrow 1$, the estimates of 
$\mathbb{E}(N_\mob(\infty))$ and $\mathbb{E}(M_\mob(\infty))$ provided by Theorem \ref{T4} then yield
\begin{equation}
\gamma \sim -(1- \log(1-\varrho)) \, \frac{1-\varrho}{\log(1-\varrho_\tot)}, \qquad 
\Gamma \sim -(1- \log(1-\varrho)) \, \frac{\varrho_\tot-\varrho}{\log(1-\varrho_\tot)}
\label{Throughps}
\end{equation}
for each customer class of the closed-loop queue.

\subsection{Organization of paper}

Before presenting the proofs of the latter results, we first discuss in Section~\ref{sec:asymptotics} their relevance compared to the current literature on both Large Deviations and Singular Perturbation methods. Section~\ref{Sec:SCH} contains preliminary technical results. Although Theorem~\ref{T1} above was claimed as a consequence of Theorem~\ref{T2}, the proof proceeds by first proving Theorem~\ref{T1} in Section~\ref{Sec:DRF}, and then iterating the argument to prove Theorem~\ref{T2} in Section~\ref{Sec:FAE}. Section~\ref{Sec:FAE} also presents a direct Corollary to Theorem~\ref{T2} concerning the asymptotic behavior of the marginal distributions of $N(\infty)$ and $M(\infty)$ (Corollary~\ref{C1}). The proof of Theorem~\ref{T0} is then given in Section~\ref{Sec:EOB}; it essentially relies on the asymptotics that Theorem~\ref{T2} enables us to obtain for the generating function of distribution $\pmb{\Pi}_A$. Appendix~\ref{PL1bis} establishes that function $\Phi$ is the decay rate of the single-server PS-queue with $A$ permanent customers; Appendices~\ref{PL0} and~\ref{PL1} provide the proofs of two intermediate results that intervene in the proof of Theorem~\ref{T0}.


\section{Asymptotics of stationary distributions}
\label{sec:asymptotics}

Prior to proceeding to the detailed proofs of our main results, we first review previous works addressing asymptotics for the stationary distribution of Markov jump processes.

\subsection{Large Deviations Principles} \label{sub:LDP}

Consider a scaled jump process 
$\mathbf{Z}_A$ in some subset of the lattice $\mathbb{Z}^d/A$, $d \geqslant 2$. The scaling applied to $\mathbf{Z}_A$ is said \textit{regular} if all transition rates are proportional to parameter $A$. Assume then that an LDP can be stated for $\mathbf{Z}_A$, with an action functional $S_T$ defined on the metric space $\mathscr{C}_{T}(\mathbb{R}^d)$ of continuous 
$\mathbb{R}^d$-valued functions on interval $[0,T]$, $T \geqslant 0$. If process 
$\mathbf{Z}_A$ has a stationary distribution $\pmb{\Pi}_A$, its decay rate
\begin{equation}
H(\mathbf{z}) = - \lim_{A \uparrow +\infty} \frac{1}{A} \cdot 
\log \pmb{\Pi}_A(A\mathbf{z}), \qquad 
\mathbf{z} = (z_1,\ldots,z_d) \in \mathbb{R}^d,
\label{DecayH}
\end{equation}
is then obtained~\cite[Chap.5, 6]{Frei12} by minimizing functionals $S_T$, $T \geqslant 0$, on the whole union 
$\bigcup_{T \geqslant 0} \mathscr{C}_{T}(\mathbb{R}^d)$. 

The scaling presently envisaged for process $(N,M)$, however, is not regular since only 
$\beta = O(A)$ grows to infinity while $\alpha$ is kept fixed. This amounts to squeezing the time scale of the impatient customers arrival process, while keeping the initial time scale for the patient customers arrival flow. For this \textit{singular} scaling, $N$ is thus seen as a slow process driven by the fast variations of $M$.

Similar settings have been investigated in previous work, but none seems to directly apply to our problem. Given a homogeneous Markov chain $\mathbf{Y}$ with finite state space 
$\Gamma \subset \mathbb{N}$, consider the pair 
$\mathbf{Z}_A = (\mathbf{X}_A,\mathbf{Y}_A)$ where the fast process 
$\mathbf{Y}_A(t) = \mathbf{Y}(At)$, $t \geqslant 0$, drives the slow process 
$\mathbf{X}_A$ via the differential equation 
\begin{equation}
\frac{\mathrm{d}\mathbf{X}_A}{\mathrm{d}t}(t) = 
\mathbf{b}(\mathbf{X}_A(t),\mathbf{Y}(At)), 
\qquad t \geqslant 0,
\label{EquDiffA}
\end{equation}
for a drift 
$\mathbf{b}:\mathbb{R}^{d-1} \times \Gamma \rightarrow \mathbb{R}^{d-1}$. Then:
\begin{itemize}
	\item an LDP can be stated~\cite[Chap.7, Section 4]{Frei12} for the slow component $\mathbf{X}_A$ of $\mathbf{Z}_A = (\mathbf{X}_A,\mathbf{Y}_A)$, with an action functional $S_T$ defined on space $\mathscr{C}_{T}(\mathbb{R}^{d-1})$; 
	\item consider further the set $\mathscr{L}_{0,T}^\Gamma$ of mappings 
$\mathfrak{p}:(t,y) \in [0,T] \times \Gamma \mapsto \mathfrak{p}(t,y)$ such that 
$\mathfrak{p}(\cdot,y)$ is Borelian on $[0,T]$ for each $y \in \Gamma$, and the vector $(\mathfrak{p}(t,y))_{y \in \Gamma}$ is a probability on $\Gamma$ for each $t \in [0,T]$. Let then the process $\mathfrak{P}_A$ be the random element of $\mathscr{L}_{0,T}^\Gamma$ defined by 
\[ \mathfrak{P}_A(t,y) = \mathbf{1}_{Y_A(t) = y}, \qquad t \in [0,T], \; 
y \in \Gamma. \]
An LDP for the pair $(\mathbf{X}_A,\mathfrak{P}_A)$ is then stated in ~\cite[Theorem 2.3]{Fag10}, with an action functional $\mathfrak{S}_T$ now defined on the product space $\mathscr{C}_{T}(\mathbb{R}^{d-1}) \times \mathscr{L}_{0,T}^\Gamma$.
\end{itemize}


The case where $\mathbf{X}_A$ is a diffusion process has also received attention. In~\cite{Puhalskii16:0}, a general LDP is derived when both $\mathbf{X}_A$ and $\mathbf{Y}_A$ are diffusion processes while, closer to our case, \cite{Huang16:0} considers the case where $\mathbf{X}_A$ is a diffusion process and $\mathbf{Y}_A$ a finite-state space Markov chain. To our knowledge, however, no general LDP is known in the case when the slow component $\mathbf{X}_A$ is itself a Markov chain depending on the evolution of the fast driving chain $\mathbf{Y}_A$, both evolving with increments of order $O(1/A)$, all the more since all previous results assume the finiteness of the state space $\Gamma$ of the fast process $\mathbf{Y}$, which assumption fails for the process $M$ presently considered.


\subsection{Sharp Asymptotics via singular perturbation methods} 

LDP's concern the asymptotic behavior of stationary distributions on the logarithmic scale. In order to derive sharp (that is, not only logarithmic) asymptotics, we will now invoke singular perturbation methods. 
%
%
These methods have been justified for specific classes of problems: 

- both a classification and rigorous foundation are established in~\cite[Chap.6]{Eck79} for some classical families of partial differential equations;

- the present case of jump processes has been considered in~\cite[Chap.4, 6]{Yin13} where asymptotics of the solutions of transient backward or forward Kolmogorov equations at finite time $t$ are stated, but for a regular scaling only (in a different meaning to that introduced in Section \ref{TTS} above, the two-time scales in \cite{Yin13} refer to either small $t = O(\varepsilon)$ or large $t = O(1/\varepsilon)$);

- asymptotic expansion for Laplace transforms have also been proven in the following context \cite{Fle89}. Consider a Markov process $\mathbf{Z}_A$ in $\mathbb{R}^d$, moving with a deterministic drift $\mathbf{b}$ and perturbed by a jump process with jump rates $O(A)$ and increments $O(1/A)$. Given a function $f \in \mathscr{C}^\infty(\mathbb{R}^d;\mathbb{R})$, let
$$
\mathrm{F}_A(\mathbf{z},t) = \mathbb{E} 
\left ( e^{-A \cdot f(\mathbf{Z}_A(T))} 
\, \lvert \, \mathbf{Z}_A(t) = \mathbf{z} \right ), 
\qquad \mathbf{z} \in \mathbb{R}^d, \; t \in [0,T].
$$
Provided that the drift $\mathbf{b}$ belongs to $\mathscr{C}^\infty(\mathbb{R}^d;\mathbb{R}^d)$ and the transition distribution of $\mathbf{Z}_A$ satisfies boundedness and non degeneracy conditions, it is then shown \cite[Theorem 5.1]{Fle89} that function $\mathrm{F}_A$ is $\mathscr{C}^\infty$ on 
$\mathbb{R}^d \times [0,T]$ and has the asymptotic expansion
\begin{equation}
\mathrm{F}_A(\mathbf{z},t) = \exp \left [ -A \cdot 
G(\mathbf{z},t) - G_0(\mathbf{z},t) - \cdots - \frac{G_{k}(\mathbf{z},t)}{A^{k}} + 
O \left ( \frac{1}{A^{k+1}} \right ) \right ]
\label{ASRd}
\end{equation}
for any $k \in \mathbb{N}$. Functions $G$ and $G_k$, $k \geqslant 0$, are locally $\mathscr{C}^\infty$ and recursively obtained by solving partial differential equations. Expansion~\eqref{ASRd} applies, in particular, to the Laplace transform of $\mathbf{Z}_A(T)$ at finite time $T$ by choosing $f(\mathbf{z}) = \langle \mathbf{u}, \mathbf{z} \rangle$ for given $\mathbf{u} \in \mathbb{R}^d$. If process $\mathbf{Z}_A$ has a stationary distribution, the Laplace transform of $\mathbf{Z}_A(\infty)$ is then deduced from~\eqref{ASRd} by letting $t \uparrow +\infty$.

Such expansions have been invoked and applied in other contexts, even if their existence is not formally stated. The analysis of coupled queuing systems is, in particular, one of the application fields of these perturbation methods (see \cite{Kne86bis, Kne95},~\cite[Chap.9]{Schuss10} and references therein). In this framework, expansions of the form
\begin{equation}
\pmb{\Pi}_A(A \mathbf{z}) = \frac{1}{(2\pi A)^{d/2}} \, 
\exp \left [ -A \cdot H(\mathbf{z}) - h_0(\mathbf{z}) - 
\frac{h_1(\mathbf{z})}{A} - \cdots \right ], 
\qquad \mathbf{z} \in \mathbb{R}^d, 
\label{ExpAs}
\end{equation}
for the stationary distribution $\pmb{\Pi}_A$ are assumed to hold, with the decay rate $H$ and functions $h_0$, $h_1$, $\dots$ successively determined via the Kolmogorov equations. While the existence of expansion (\ref{ExpAs}) is admitted, the actual determination of unknown functions $H$, $h_0$, $h_1$, \dots\ is considered as a consistent argument for its validity. To illustrate simply the approach developed in the latter references, consider the one-dimensional processes ($d = 1$) on the half-line $[0,+\infty)$. Using (\ref{ExpAs}), the asymptotics of $\pmb{\Pi}_A(Ax)$ for fixed $x > 0$ and for small $x = n/A$, $n = O(1)$, are shown to differ by some unknown multiplying constant; this constant is determined through the “Asymptotic Matching” principle \cite[Chap.7, 7.4]{BEND99} which consists in identifying asymptotics of $\pmb{\Pi}_A(Ax)$ and $\pmb{\Pi}_A(n)$ when making $x$ tend to 0 and $n$ tend to $+\infty$, respectively. 


To summarize this review, we can thus assert that 
LDP's with singular scaling are known for some specific classes of Markov processes, although not including the case of the birth-and-death process $(N,M)$ presently considered. On the other hand, the analytical approach developed in the Singular Perturbation framework can be applied for classes of processes for which no LDP is known; assuming the existence of an asymptotic expansion of the form (\ref{ExpAs}), this analytical approach then brings more precise information on the asymptotic behavior of their distribution. In this paper, admitting the existence of expansions such as (\ref{ExpAs}), the analytical approach will thus be chosen to obtain the desired asymptotics for the stationary distribution $\pmb{\Pi}_A$ of process $(N, M)$ in the regime where $A$ grows to infinity.


\section{Preliminary results}
\label{Sec:SCH}

In this section, we introduce a scaled version of distribution $\pmb{\Pi}_A$ and explicit the Kolmogorov equation it satisfies. We also recall a Laplace expansion that will be used repeatedly.


\subsection{Kolmogorov equations and scale change}


Recall that $\pmb{\Pi}_A$ denotes the stationary distribution of $(N,M)$ when the stability condition~\eqref{StabCond} holds. By definition of its dynamics, it satisfies the associated set of Kolmogorov equations
\begin{align}
& \left[ \alpha + \beta + \left ( \frac{\mu \, n}{n+m} + 
\frac{\nu \, m}{n+m} \right ) \mathbf{1}_{n + m > 0} + \theta \, m \right ] \pmb{\Pi}_A(n,m) \; =
\label{Kol0} \\
& \; \alpha \, \pmb{\Pi}_A(n-1,m) \mathbf{1}_{n > 0} + 
\beta \, \pmb{\Pi}_A(n,m-1) \mathbf{1}_{m > 0} + 
\frac{\mu(n+1)}{n + m + 1} \, \pmb{\Pi}_A(n+1,m) \; + 
\nonumber \\
& (m+1) \left ( \frac{\nu }{n + m + 1} + \theta \right ) 
\pmb{\Pi}_A(n,m+1), 
\qquad \; \; (n,m) \in \mathbb{N}^2.\nonumber
\end{align}

As explained in Section~\ref{sec:model-main-results}, in the heavy traffic regime $A \to \infty$, $N$ and $M$ become of the order of $A$ and we will consequently study them on this scale. More precisely, we define the function $\mathbf{p}_A$ by
\begin{equation}
\mathbf{p}_A(x,y) = A^2 \cdot \pmb{\Pi}_A([Ax],[Ay]), \qquad x, \; y \geqslant 0,
\label{Sca0}
\end{equation}
$[x] \in \mathbb{N}$ denoting the integer part of $x \in \mathbb{R}^+$. Linear system~\eqref{Kol0} translates for 
$\mathbf{p}_A$ into the following functional equations on the open quarter-plane 
$(0,+\infty) \times (0,+\infty)$ and its boundary $\{(x,0), x \geqslant 0\} \cup \{(0,y), y \geqslant 0\}$, namely
\begin{align}
& \left[ \alpha + A \theta + \displaystyle \frac{\mu \, x}{x+y} + 
\frac{\nu \, y}{x+y} + A \theta \, y \right ] \mathbf{p}_A(x,y) \; =
\label{Kol1} \\
& \; \alpha \, \mathbf{p}_A \left (x-\frac{1}{A},y \right ) + 
A \theta \, \mathbf{p}_A \left (x,y-\frac{1}{A} \right ) 
+ \frac{\mu(Ax+1)}{A(x+y) + 1} \, \mathbf{p}_A \left (x+\frac{1}{A},y \right )
\nonumber \\
& + (Ay+1) \left ( \frac{\nu }{A(x+y) + 1} + \theta \right ) 
\mathbf{p}_A \left (x,y+\frac{1}{A} \right ), \qquad \qquad x > 0, \; \; y > 0,
\nonumber
\end{align}
in the interior quarter-plane, 
\begin{equation}
\left\{
\begin{array}{ll}
\displaystyle \left[ \alpha + A \theta + \mu \right ] \mathbf{p}_A(x,0) = 
\alpha \, \mathbf{p}_A \left (x-\frac{1}{A},0 \right ) + 
\mu \, \mathbf{p}_A \left (x+\frac{1}{A},0 \right ) 
\\
\qquad \qquad \qquad \qquad \qquad \quad \; 
+ \displaystyle \left ( \frac{\nu }{A x + 1} + \theta \right ) 
\mathbf{p}_A \left (x,\frac{1}{A} \right ), \qquad \; x > 0,
\\ \\
\displaystyle \left[ \alpha + A \theta + \nu + A \theta y \right ] 
\mathbf{p}_A(0,y) = A \theta \, \mathbf{p}_A \left (0,y-\frac{1}{A} \right ) + 
\frac{\mu}{Ay + 1} \times 
\\
\; \; \displaystyle \mathbf{p}_A \left (\frac{1}{A},y \right ) + 
(Ay+1) \left ( \frac{\nu }{A y + 1} + \theta \right ) \, 
\mathbf{p}_A \left (0,y+\frac{1}{A} \right ), \qquad y > 0,
\label{Kol1abc}
\end{array} \right.
\end{equation}
on the boundary and 
\begin{equation}
\displaystyle (\alpha + A \, \theta) \, \pmb{\Pi}_A(0,0) = 
\frac{\mu}{A^2} \, \mathbf{p}_A \left ( \frac{1}{A}, 0 \right ) + 
\frac{\nu + \theta}{A^2} \, \mathbf{p}_A \left ( 0, \frac{1}{A} \right ),
\label{Kol1c}
\end{equation}
at the origin, together with the normalization condition
\begin{equation}
\iint_{\mathbb{R}^{2+}} \mathbf{p}_A(x,y) \, \mathrm{d}x \, \mathrm{d}y = 1.
\label{Kol1bis}
\end{equation}

In the rest of the paper, we assume that the assumptions of Theorem~\ref{T2} hold, that is, $\varrho < 1$ and the expansion~\eqref{AS0} holds with functions $H$, $h_0$, $h_1$ and $h_2$ respectively of class $\mathscr{C}^3$, $\mathscr{C}^2$, $\mathscr{C}^1$ and $\mathscr{C}^0$ in the open quarter-plane $\mathbb{R}^{+*} \times \mathbb{R}^{+*}$, and function $H$ being non-negative, continuous over the closed quarter plane $\mathbb{R}^+ \times \mathbb{R}^+$ and with $H(x^*,y^*) = 0$. In terms of density function $\mathbf{p}_A$ introduced in (\ref{Sca0}), the expansion~\eqref{AS0} equivalently reads
\begin{multline}
\mathbf{p}_A(x,y) = \frac{A}{2\pi} \; \times 
\\
\exp \left [ -A \cdot H(x,y) - h_0(x,y) -
\frac{h_1(x,y)}{A} - \frac{h_2(x,y)}{A^2} + O \left ( \frac{1}{A^3} \right ) \right]
\label{Dev0}
\end{multline}
for all $x, \; y > 0$.

\begin{remark}
	An explicit solution to system~\eqref{Kol0} seems out of reach for an arbitrary set of parameters 
	$\alpha$, $\mu$, $\beta$, $\nu $ and $\theta$. To obtain an efficient approximation for this stationary distribution $\pmb{\Pi}_A$, a heuristic framework has been developed in~\cite{POAS16} on the basis of the so-called ``Quasi-Stationary approximation''. Specifically, for any state $N = n \geqslant 0$ of the number of patient customers, this approximation assumes that the conditional distribution 
	$\mathbf{D}(m \, \vert \, n) = \mathbb{P}(M(\infty) = m \, \vert \, N(\infty) = n)$, $m \in \mathbb{N}$, of $M(\infty)$, given $N(\infty) = n$, is evaluated by considering that the dynamics of process $M$ is described by keeping the value of $N$ constant in time. The Quasi-Stationary approximation proves, in particular, more robust than the direct numerical resolution of infinite system~\eqref{Kol0}. This numerical stability is beneficial, in particular, in the high load regime when $\varrho = \alpha/\mu$ tends to 1. 
	
	Remarkably, the functional $\mathfrak{S}_T$ arising in the LDP of Section~\ref{sub:LDP} involves the Quasi-Stationary distribution $\mathbf{D}(\cdot \, \vert \, \mathbf{x})$ of $\mathbf{Y}$, when fixing the state $\mathbf{x}$ of the slow process $\mathbf{X}_A$.
\end{remark}

\subsection{Laplace expansion}

We finally recall a classical Laplace expansion for an integral with exponential integrand and a large parameter $A$, which will be repeatedly used in the forthcoming sections. Given 

- a real (possibly infinite) interval $[a,b]$,

- real-valued functions $g$ and $h$ on $[a,b]$ such that $h \in \mathscr{C}^2[a,b]$ has a unique minimum at the interior point $r^* \in \; (a,b)$ with $h''(r^*) \neq 0$, 

- and $g \in \mathscr{C}^0[a,b]$ with $g(r^*) \neq 0$,

\noindent then~\cite[Section 5.3, Equ.(5.3.9)]{BLEI86}
\begin{equation}
\int_a^b e^{- A \cdot h(r)} g(r) \, \mathrm{d}r = e^{-A \cdot h(r^*)} 
\sqrt{\frac{2\pi}{A \, h''(r^*)}} \, g(r^*) 
\left [ 1 + O \left ( \frac{1}{A} \right ) \right ].
\label{Lapla0}
\end{equation}

Similar asymptotics hold for complex-valued integrals with the same conditions for both functions $g$ and $h$, namely 
\begin{multline}
\int_a^b e^{- A \cdot h(r) + i A \zeta r} g(r) \mathrm{d}r\\
= 
e^{-A \cdot h(r^*) + i A \zeta r^*} \cdot 
\exp \left ( \frac{- A \zeta^2}{2 h''(r^*)} \right )
\sqrt{\frac{2\pi}{A \, h''(r^*)}} \, g(r^*) 
\left [ 1 + O \left ( \frac{1}{A} \right ) \right ]
\label{Lapla0BIS}
\end{multline}
for large $A$ and any real constant $\zeta$. When either function $g$ or $h$ depends smoothly on a real parameter $\sigma$, the $O(1/A)$ remainder in~\eqref{Lapla0} or~\eqref{Lapla0BIS} tends to 0 when 
$A \uparrow +\infty$, uniformly with respect to $\sigma$ pertaining to a given compact interval.



\section{Proof of Theorem~\ref{T1}}
\label{Sec:DRF}


Consider functional equation~\eqref{Kol1} for large $A$. Fix the point $(x,y)$ with $x > 0$ and $y > 0$; expansion~\eqref{Dev0} applied at neighboring point $(x-1/A,y)$ yields
\begin{multline*}
\mathbf{p}_A \left (x-\frac{1}{A},y \right ) =\\
\exp \left [ \, - A \cdot H\left (x-\frac{1}{A},y \right ) \, - \, h_0\left (x-\frac{1}{A},y \right ) - 
\frac{1}{A} \, h_1 \left (x-\frac{1}{A},y \right ) + \cdots \right ]
\end{multline*}
(up to factor $A/2\pi$). Using the assumed smoothness of $H$, $h_0$ and $h_1$, Taylor expansions at first order in $1/A$ near point $(x,y)$ give
\begin{multline*}
\mathbf{p}_A \left (x-\frac{1}{A},y \right ) = \; 
\exp \left [-A \, H(x,y) - h_0(x,y) - 
\frac{1}{A} \, h_1(x,y) + \cdots \right ] \times \\
\; \exp \left [ \frac{\partial H}{\partial x}(x,y) 
- \frac{1}{2A} \frac{\partial^2 H}{\partial x^2}(x,y) + 
\frac{1}{A} \frac{\partial h_0}{\partial x}(x,y) + \cdots \right ],
\end{multline*}
dots denoting $O(1/A^2)$ terms. By~\eqref{Dev0} again, the first exponential factor in the right-hand side of the latter relation equals $\mathbf{p}_A(x,y)$; expanding the second exponential term at first order in $1/A$ then gives
\begin{equation}
\frac{\mathbf{p}_A(x-1/A,y)}{\mathbf{p}_A(x,y)} = e^{+
\partial_x H} \left ( 1 - \frac{1}{A} \left [ \frac{1}{2} \, 
\frac{\partial^2 H}{\partial x^2} - \frac{\partial h_0}{\partial x} \right ] + \cdots \right ),
\label{PA1}
\end{equation}
all derivatives being taken at point $(x,y)$ ($\partial_x H$, $\partial_y H$ denote derivatives 
$\partial H/\partial x$ and $\partial H/\partial y$ for short, respectively). In a similar manner, we obtain the expansions of function $\mathbf{p}_A$ at neighboring points $(x,y-1/A)$, $(x+1/A,y)$ and $(x,y+1/A)$ in the form
\begin{equation}
\left\{
\begin{array}{ll}
\displaystyle \frac{\mathbf{p}_A(x,y-1/A)}{\mathbf{p}_A(x,y)} = 
e^{+\partial_y H} 
\left ( 1 - \frac{1}{A} \left [ \frac{1}{2} \, 
\frac{\partial^2 H}{\partial y^2} - \frac{\partial h_0}{\partial y} \right ] + 
\cdots \right ),
\\ \\
\displaystyle \frac{\mathbf{p}_A(x+1/A,y)}{\mathbf{p}_A(x,y)} = e^{ 
- \partial_x H}
\left ( 1 - \frac{1}{A} \left [ \frac{1}{2} \, 
\frac{\partial^2 H}{\partial x^2} + \frac{\partial h_0}{\partial x} \right ] + \cdots \right ),
\\ \\
\displaystyle \frac{\mathbf{p}_A(x,y+1/A)}{\mathbf{p}_A(x,y)} = e^{ 
-\partial_y H}
\left ( 1 - \frac{1}{A} \left [ \frac{1}{2} \, 
\frac{\partial^2 H}{\partial y^2} + \frac{\partial h_0}{\partial y} \right ] + \cdots \right ).
\label{PA234}
\end{array} \right.
\end{equation}
Inserting expressions~\eqref{PA1}--\eqref{PA234} into equation~\eqref{Kol1} and dividing throughout by factor $\mathbf{p}_A(x,y)$, we then obtain
\begin{align}
& \; \alpha + A \, \theta + \frac{\mu \, x}{x+y} + 
\frac{\nu \, y}{x+y} + A \, \theta y = 
\alpha \cdot e^{
\partial_x H}
\left ( 1 - \frac{1}{A} \left [ \frac{1}{2} \, 
\frac{\partial^2 H}{\partial x^2} - \frac{\partial h_0}{\partial x} \right ] + \cdots \right ) \; + 
\nonumber \\
& \; A \, \theta \cdot e^{
\partial_y H}
\left ( 1 - \frac{1}{A} \left [ \frac{1}{2} \, 
\frac{\partial^2 H}{\partial y^2} - \frac{\partial h_0}{\partial y} \right ] + \cdots \right ) \; + 
\nonumber \\
& \; \left [ \frac{\mu \, x}{x+y} + \frac{\mu \, y}{A(x+y)^2} + \cdots \right ] 
e^{- \partial_x H} 
\left ( 1 - \frac{1}{A} \left [ \frac{1}{2} \, 
\frac{\partial^2 H}{\partial x^2} + \frac{\partial h_0}{\partial x} \right ] + \cdots \right ) \; + 
\nonumber \\
& \; \left [ \frac{\nu \, y}{x+y} + \frac{\nu \, x}{A(x+y)^2} + \cdots + 
\theta(Ay+1) \right ] 
e^{- \partial_y H} 
\left ( 1 - \frac{1}{A} \left [ \frac{1}{2} \, 
\frac{\partial^2 H}{\partial y^2} + \frac{\partial h_0}{\partial y} \right ] 
+ \cdots \right ).
\nonumber
\end{align}
At order $O(A)$ and $O(1)$ for large $A$, the latter relation then entails 
\begin{equation}
\theta + \theta \, y = \theta \cdot e^{\partial_y H} + 
\theta \, y \cdot e^{- \partial_y H}
\label{EQU1}
\end{equation}
and
\begin{multline}
\alpha + \frac{\mu \, x}{x+y} + \frac{\nu \, y}{x+y} = 
\alpha \cdot e^{\partial_x H} - 
\theta \, e^{\partial_y H} \left [ \frac{1}{2} \, 
\frac{\partial^2 H}{\partial y^2} - 
\frac{\partial h_0}{\partial y} \right ]\\
+ \, \frac{\mu \, x}{x+y} \cdot e^{-\partial_x H} + 
\left [ \frac{\nu \, y}{x+y} + \theta \right ]e^{-\partial_y H} - 
\theta \, y e^{-\partial_y H} \left [ \frac{1}{2} \, 
\frac{\partial^2 H}{\partial y^2} + \frac{\partial h_0}{\partial y} \right ]
\label{EQU1BIS}
\end{multline}
respectively. We then successively observe that

\textbf{(A)} Relation~\eqref{EQU1} is a quadratic equation for 
$e^{\partial_y H}$. We can exclude the trivial solution 
$e^{\partial_y H(x,y)} = 1$ which would give $\partial_y H(x,y) = 0$ and a solution $H$ depending on variable $x$ only. We are thus left with the other solution $e^{\partial_y H(x,y)} = y$, that is, $\partial_y H(x,y) = \log y$ for $y > 0$. Integrating with respect to variable $y$, the latter relation provides
\begin{equation}
H(x,y) = \widetilde \Phi(x) + \Psi(y), \qquad x > 0, \, y > 0,
\label{HH}
\end{equation}
for a function $\widetilde \Phi$ to be determined and with function $\Psi$ given by~\eqref{eq:Psi}.

\textbf{(B)} After~\eqref{HH}, we have $\partial H/\partial y = \log y$ and 
$\partial^2 H/\partial y^2 = 1/y$ at point $(x,y)$, $x > 0$, $y > 0$. Carrying over these values into equation~\eqref{EQU1BIS}, the latter solves for the first derivative $\partial h_0/\partial y$ into
$$
\theta \, \frac{\partial h_0}{\partial y}(x,y) = \frac{1}{y-1} \left [
\alpha(1-e^{\widetilde \Phi'(x)}) + 
\frac{\mu \, x}{x+y}(1-e^{-\widetilde \Phi'(x)}) \right ] + \frac{\nu }{x+y} 
+ \frac{\theta}{2y}.
$$
Integrating the latter equality with respect to variable $y$ then yields
\begin{multline}
\theta \, h_0(x,y) = \; \theta \, \Omega(x) + 
\left [\alpha(1-e^{\widetilde \Phi'(x)}) + \frac{\mu \, x}{x+1} (1-e^{-\widetilde \Phi'(x)}) \right ] \log(y-1) \\
- \, \frac{\mu \, x}{x+1} (1-e^{-\widetilde \Phi'(x)}) \log(x+y) + \nu \log(x+y) + \frac{\theta}{2} \, \log y 
\label{h0}
\end{multline}
for all $x > 0$, $y > 0$ and some unknown function $\Omega$. By assumption, $h_0$ is continuously differentiable in the open quarter-plane and, in particular, on the vertical line $y = 1$. After relation~\eqref{h0}, this implies that the coefficient of 
$\log(y-1)$ should vanish identically, hence 
$$
\alpha(1-e^{\widetilde \Phi'(x)}) + \frac{\mu \, x}{x+1} (1-e^{-\widetilde \Phi'(x)}) = 0
$$
or, equivalently,
$$
\alpha e^{2 \, \widetilde \Phi'(x)} - \left ( \alpha + \frac{\mu \, x}{x +1} \right ) 
e^{\widetilde \Phi'(x)} + \frac{\mu \, x}{x + 1} = 0, 
\qquad x > 0.
$$
This quadratic equation for $e^{\widetilde \Phi'(x)}$ has the non-constant ($\neq 1$) solution
\begin{equation}
e^{\widetilde \Phi'(x)} = \frac{\mu x}{\alpha(x+1)} = \frac{x}{\varrho(x+1)}
\label{ExpPhi'}
\end{equation}
which differential equation readily integrates for $\widetilde \Phi$ into 
\begin{equation}
\widetilde \Phi(x) = x \log x -(x+1)\log(x+1) - x \log \varrho + C_0, \qquad x > 0,
\label{PhiC0}
\end{equation}
for some constant $C_0$. As $\Psi(y^*) = \Psi(1) = 0$, the assumption $H(x^*,y^*) = 0$ on $H$ then implies that $\widetilde \Phi(x^*) = 0$ with $x^*$ introduced in (\ref{Defx*}); this readily determines the value 
$C_0 = - \log(1-\varrho)$. The latter and~(\ref{PhiC0}) thus entirely determine the function $\widetilde \Phi$, which is thus equal to $\Phi$ defined by~\eqref{eq:Phi}. The final expression of decay rate $H = \Phi + \Psi$ in the open quarter-plane $\mathbb{R}^{+*} \times \mathbb{R}^{+*}$ follows. Since $H$ is assumed to be continuous on the closed quarter plane, this expression extends by continuity to 
$\mathbb{R}^+ \times \mathbb{R}^+$, which concludes the proof of Theorem~\ref{T1}.

\begin{remark}
Equation~\eqref{EQU1} is the so-called Hamilton-Jacobi equation for the component 
$M$~\cite[Chap.5, Theorem 4.3]{Frei12} which determines the partial derivative 
$\partial H/\partial y$ only. In the present singular Large Deviations setting, however, the full derivation of function $H$ requires another partial differential equation for the next function $h_0$, together with its smoothness across the line $y = y^* = 1$.
\end{remark}


\section{Proof of Theorem~\ref{T2}}
\label{Sec:FAE}


We now determine the prefactor $h_0$ in the expansion~\eqref{Dev0} of density $\mathbf{p}_A$. Given the expression~\eqref{PhiC0} of function $\tilde \Phi$, formula~\eqref{h0} for function $h_0$ now easily reduces to
\begin{equation}
h_0(x,y) = \Omega(x) + 
\frac{\mu}{\theta}(1-\varrho) 
\left ( \frac{x-x^*}{x+1} \right ) \log(x+y) + c \log(x+y) + \frac{\log y}{2}
\label{h0bis}
\end{equation}
for $x > 0$, $y > 0$, with $c = \nu /\theta$, $x^*$ introduced in (\ref{Defx*}) and some unknown function $\Omega$. In order to specify $\Omega$, we evaluate terms of subsequent order $O(1/A)$ in the functional equation~\eqref{Kol1} for $x > 0$ and $y > 0$. 

To this end, expansions~\eqref{PA234} for both $\mathbf{p}_A(x,y-1/A)$ and $\mathbf{p}_A(x,y+1/A)$ have to be extended up to order $O(1/A^2)$. Besides, the expansions for $\mathbf{p}_A(x \pm 1/A,y)$ at order 
$O(1/A)$ only are still sufficient. Applying then~\eqref{Dev0} at point $(x,y-1/A)$, we have 
\begin{multline*}
\mathbf{p}_A \left (x,y -\frac{1}{A} \right ) = 
\exp \Bigl [ \; - A \cdot H\left (x,y -\frac{1}{A} \right ) - 
h_0\left (x,y -\frac{1}{A} \right ) - 
\frac{1}{A} \, h_1 \left (x,y -\frac{1}{A} \right ) \\
\; - \frac{1}{A^2} \, h_2 \left (x,y -\frac{1}{A} \right ) + 
\cdots \Bigr ];
\end{multline*}
(up to multiplying factor $A/2\pi$). Writing Taylor expansions at second order in $1/A$ for functions $H$, $h_0$, $h_1$, $h_2$, \dots\ near point $(x,y)$, we then easily obtain 
\begin{align}
& \mathbf{p}_A \left (x,y -\frac{1}{A} \right ) = 
\exp \left [-A \, H(x,y) - h_0(x,y) - 
\frac{1}{A} \, h_1(x,y) - \frac{1}{A^2} \, h_2 \left (x,y \right ) 
+ \cdots \right ] \times 
\nonumber \\
& \; e^{\partial_y H} \exp \left [ 
- \frac{1}{2A} \frac{\partial^2 H}{\partial y^2} + \frac{1}{6A^2} 
\frac{\partial^3 H}{\partial y^3} + \cdots + 
\frac{1}{A} \frac{\partial h_0}{\partial x} - \frac{1}{2A^2} 
\frac{\partial^2 h_0}{\partial y^2} + \cdots + 
\frac{1}{A^2} \frac{\partial h_1}{\partial y} + \cdots \right ],
\nonumber
\end{align}
all derivatives being taken at point $(x,y)$ and dots denoting $O(1/A^3)$ terms. By expansion~\eqref{Dev0} again, the first exponential factor in the right-hand side of the latter equality equals $\mathbf{p}_A(x,y)$ (up to $A/2\pi$). Expanding the second exponential term in the right-hand side at second order in $1/A$ then provides
\begin{multline}
\frac{\mathbf{p}_A(x,y-1/A)}{\mathbf{p}_A(x,y)} = \; e^{+\partial_y H} 
\Bigl ( 1 - \frac{1}{A} \left [ \frac{1}{2} \, 
\frac{\partial^2 H}{\partial y^2} - \frac{\partial h_0}{\partial y} \right ] \;\\
+ \; \frac{1}{A^2} \left \{ \frac{1}{2} \left [ \frac{1}{2} \, 
\frac{\partial^2 H}{\partial y^2} - \frac{\partial h_0}{\partial y} \right ]^2 + \frac{1}{6} \frac{\partial^3 H}{\partial y^3} - \frac{1}{2} 
\frac{\partial^2 h_0}{\partial y^2} + \frac{\partial h_1}{\partial y} 
\right \} + \cdots \Bigr ).
\label{2PA2}
\end{multline}
At neighboring point $(x,y+1/A)$, a similar calculation yields
\begin{multline}
\frac{\mathbf{p}_A(x,y+1/A)}{\mathbf{p}_A(x,y)} = \; e^{-\partial_y H} 
\Bigl ( 1 - \frac{1}{A} \left [ \frac{1}{2} \, 
\frac{\partial^2 H}{\partial y^2} + 
\frac{\partial h_0}{\partial y} \right ] \; \\
+ \; \frac{1}{A^2} \left \{ \frac{1}{2} \left [ \frac{1}{2} \, 
\frac{\partial^2 H}{\partial y^2} + \frac{\partial h_0}{\partial y} \right ]^2 - \frac{1}{6} \frac{\partial^3 H}{\partial y^3} - \frac{1}{2} 
\frac{\partial^2 h_0}{\partial y^2} - \frac{\partial h_1}{\partial y} 
\right \} + \cdots \Bigr ).
\label{2PA4}
\end{multline}
Inserting then expansions~\eqref{2PA2},~\eqref{2PA4} and retaining terms of order 
$1/A$ in the identity following~\eqref{PA234} in the proof of Theorem~\ref{T1}, we then obtain the equation 
\begin{align}
0 = & \; -\alpha \, e^{\partial_x H} \left [ \frac{1}{2} \, 
\frac{\partial^2 H}{\partial x^2} - 
\frac{\partial h_0}{\partial x} \right ] 
\label{H0H1} \\
& \; + \theta \, e^{\partial_y H} \left \{ \frac{1}{2} \left [ \frac{1}{2} \, 
\frac{\partial^2 H}{\partial y^2} - \frac{\partial h_0}{\partial y} \right ]^2 + \frac{1}{6} \frac{\partial^3 H}{\partial y^3} - \frac{1}{2} 
\frac{\partial^2 h_0}{\partial y^2} + \frac{\partial h_1}{\partial y} 
\right \} 
\nonumber \\
& \; - \frac{\mu \, x}{x+y} \, e^{-\partial_x H} \left [ \frac{1}{2} \, 
\frac{\partial^2 H}{\partial x^2} + \frac{\partial h_0}{\partial x} \right ] 
+ \frac{\mu \, y}{(x+y)^2} \, e^{-\partial_x H} 
\nonumber \\
& \; - \left ( \frac{\nu \, y}{x+y} + \theta \right ) e^{-\partial_y H} 
\left [ \frac{1}{2} \, \frac{\partial^2 H}{\partial y^2} + 
\frac{\partial h_0}{\partial y} \right ] + 
\frac{\nu \, x}{(x+y)^2} \, e^{-\partial_y H} 
\nonumber \\
& \; + \theta y \, 
e^{-\partial_y H} \left \{ \frac{1}{2} \left [ \frac{1}{2} \, 
\frac{\partial^2 H}{\partial y^2} + \frac{\partial h_0}{\partial y} \right ]^2 - \frac{1}{6} \frac{\partial^3 H}{\partial y^3} - \frac{1}{2} 
\frac{\partial^2 h_0}{\partial y^2} - \frac{\partial h_1}{\partial y} 
\right \}
\nonumber
\end{align}
involving $\partial h_0/\partial x$ and $\partial h_1/\partial y$. The derivative 
$\partial h_0/\partial x$ intervenes in~\eqref{H0H1} in the first and third brackets only, with multiplying coefficient
\begin{equation}
K(x,y) = \alpha \, e^{\partial_x H} - 
\frac{\mu \, x}{x+y} \, e^{-\partial_x H} = \mu \, \frac{x(x+y) - \varrho(x+1)^2}{(x+1)(x+y)},
\label{CoeffK}
\end{equation}
after using expression~\eqref{ExpPhi'} for $\widetilde \Phi'(x) = \partial H(x,y)/\partial x$. Calculating the derivatives 
$\partial H/\partial y = \log y$, $\partial^2 H/\partial y^2 = 1/y$, 
$\partial^3 H/\partial y^3 = -1/y^2$ together with 
$$
\frac{\partial h_0}{\partial y}(x,y) = \frac{c}{x+y} + \frac{1}{2y}, \quad 
\frac{\partial^2 h_0}{\partial y^2}(x,y) = - \frac{c}{(x+y)^2} - \frac{1}{2y^2}
$$
after~\eqref{h0bis}, equation~\eqref{H0H1} then reads
\begin{equation}
K(x,y) \frac{\partial h_0}{\partial x}(x,y) - L(x,y) = \theta(1-y) 
\frac{\partial h_1}{\partial y}(x,y), \qquad x > 0, \; y > 0,
\label{H0H1bis}
\end{equation}
when isolating each derivative $\partial h_0/\partial x$, 
$\partial h_1/\partial y$ and setting
\begin{multline}
L(x,y) = \; \frac{\mu}{2(x+1)^2} + \frac{\alpha}{2x(x+y)} - 
\frac{\alpha(x+1)y}{x(x+y)^2} - \frac{\nu \, x}{(x+y)^2 y} - 
\frac{c(1+c)\theta y}{2(x+y)^2}\\
- \frac{\theta}{12 y} - \theta \left [ \frac{c(1+c)}{2(x+y)^2} + \frac{c}{(x+y)y} + \frac{11}{12 \, y^2} \right ] + \frac{1}{y} \left ( \frac{\nu \, y}{x + y} + \theta \right ) 
\left ( \frac{1}{y} + \frac{c}{x + y} \right ).
\label{CoeffL}
\end{multline}
By assumption, $h_1$ is of class 
$\mathscr{C}^1$ in the open quarter-plane and, in particular, on the vertical line 
$y = y^* = 1$. In view of functional relation~\eqref{H0H1bis}, this implies that its left-hand side should identically vanish for $y = y^* = 1$, that is,
\begin{equation}
\forall \; x > 0, \qquad \frac{\partial h_0}{\partial x}(x,1) = 
\frac{L(x,1)}{K(x,1)}.
\label{H0H1ter}
\end{equation}
By expressions~\eqref{CoeffK} and~\eqref{CoeffL} of $K(x,y)$ and 
$L(x,y)$, elementary algebra provides
$$
K(x,1) = \mu(1-\varrho) \, \frac{x-x^*}{x + 1}, \qquad
L(x,1) = \mu(1-\varrho) \, \frac{x-x^*}{2x(x + 1)^2}
$$
(note that both rational fractions $K(x,1)$ and $L(x,1)$ have a simple zero at $x = x^*$ so that the ratio $L(x,1)/K(x,1)$ is well-defined for all $x > 0$). Using the latter,~\eqref{H0H1ter} then gives 
$\partial h_0(x,1)/\partial x = 1/\left[2x(x+1)\right]$, $x > 0$ which readily integrates to
\begin{equation}
h_0(x,1) = C_0 + \log \sqrt{\frac{x}{x+1}}, \qquad x > 0,
\label{Resolh0}
\end{equation}
for some constant $C_0$. Besides, expression~\eqref{h0bis} for $h_0(x,y)$ readily shows that the difference $h_0(x,y) - h_0(x,1)$ is independent of the function $\Omega$ and equals
\begin{equation}
h_0(x,y) - h_0(x,1) = \frac{\log y}{2} + \left [ c + 
\frac{\mu}{\theta}(1-\varrho) \, \left ( \frac{x-x^*}{x+1} \right ) 
\right ] \log \left ( \frac{x+y}{x+1}\right ).
\label{h0h01}
\end{equation}
Using relation~\eqref{h0h01}, we thus deduce that $h_0(x,y) = h_0(x,1) + (h_0(x,y) - h_0(x,1))$ eventually equals
$$
h_0(x,y) = C_0 + \log \sqrt{\frac{x}{x+1}} + \frac{\log y}{2} + \left [ c + 
\frac{\mu}{\theta}(1-\varrho) \, \left ( \frac{x-x^*}{x+1} \right ) 
\right ] \log \left ( \frac{x+y}{x+1}\right )
$$
for $x > 0$, $y > 0$. At first order in $1/A$, the expansion~\eqref{Dev0} for density $\mathbf{p}_A$ in the interior quarter plane therefore reads
\begin{multline}
\mathbf{p}_A(x,y) \sim \; \frac{A \, e^{-C_0}}{2\pi \sqrt{y}} \, 
e^{-A \cdot H(x,y)} \, \sqrt{\frac{x+1}{x}} \; \\
\times \; \exp \left [ \left \{ c + \frac{\mu}{\theta}(1-\varrho) 
\left ( \frac{x-x^*}{x+1} \right ) \right \} 
\log \left ( \frac{x+1}{x+y} \right ) \right ], \qquad x > 0, \; y > 0,
\label{AsymptPA0}
\end{multline}
which determines $\mathbf{p}_A$ in the interior quarter plane, up to constant $e^{-C_0}$. The latter is determined by condition~\eqref{Kol1bis}, once written as 
$\int_{\mathbb{R}^{+*} \times \mathbb{R}^{+*}} \mathbf{p}_A(x,y) 
\mathrm{d}x \, \mathrm{d}y \sim 1$; using~\eqref{AsymptPA0} and applying asymptotics~\eqref{Lapla0} successively to the integral with respect to variable $x$ and to variable $y$ in the latter, we obtain 
$e^{-C_0} = 1 - \varrho$. After the definition~\eqref{Sca0} of $\pmb{\Pi}_A$ in terms of $\mathbf{p}_A$, expression~\eqref{AsymptPA} eventually follows. This concludes the proof of Theorem~\ref{T2}, from which we can deduce the following corollary.

\begin{corol}
Given the assumptions of Theorem \ref{T2}, the marginal stationary distributions $N(\infty)$ and $M(\infty)$ are respectively asymptotic to
\begin{equation}
\left\{
\begin{array}{ll}
\mathbb{P}(N(\infty) = Ax) \sim \displaystyle \frac{1-\varrho}{\sqrt{2\pi A \,}} \, 
\sqrt{\frac{x+1}{x}} \; e^{-A \cdot \Phi(x)}, \qquad x > 0,
\\ \\
\mathbb{P}(M(\infty) = Ay) \sim \displaystyle \frac{1}{\sqrt{2\pi A \, y}} \, 
\frac{e^{-A \cdot \Psi(y)}}{(1-\varrho)^c(x^*+y)^c}, \qquad y > 0,
\end{array} \right.
\label{DensqrA}
\end{equation}
for large $A$.
\label{C1}
\end{corol}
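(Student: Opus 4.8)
The plan is to obtain each marginal by integrating the joint asymptotics of Theorem~\ref{T2} against the complementary variable and then applying the Laplace expansion~\eqref{Lapla0}. The starting point is the exact identity
\[ \mathbb{P}(N(\infty) = [Ax]) = \frac{1}{A}\int_0^\infty \mathbf{p}_A(x,y)\,\mathrm{d}y, \]
which follows directly from the definition~\eqref{Sca0} of $\mathbf{p}_A$ as a step function in $y$ with mesh $1/A$ (each strip $y\in[m/A,(m+1)/A)$ contributes $\tfrac1A\cdot A^2\pmb{\Pi}_A([Ax],m)$); the analogous identity $\mathbb{P}(M(\infty)=[Ay]) = \tfrac1A\int_0^\infty \mathbf{p}_A(x,y)\,\mathrm{d}x$ holds upon integrating in $x$.

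For the $N$-marginal, substitute $\mathbf{p}_A(x,y)\sim \frac{A}{2\pi}g(x,y)\,e^{-A(\Phi(x)+\Psi(y))}$ from Theorem~\ref{T2}, which gives
\[ \mathbb{P}(N(\infty)=[Ax]) \sim \frac{e^{-A\Phi(x)}}{2\pi}\int_0^\infty g(x,y)\,e^{-A\Psi(y)}\,\mathrm{d}y. \]
Since $\Psi'(y)=\log y$, the map $\Psi$ has its unique minimum at the interior point $y^*=1$, with $\Psi(1)=0$ and $\Psi''(1)=1$, while $g(x,1)=(1-\varrho)\sqrt{(x+1)/x}$ by direct substitution (the last two factors of $g$ collapse to $1$ at $y=1$). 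Applying~\eqref{Lapla0} with $h=\Psi$, $r^*=1$ — using the stated uniformity of the $O(1/A)$ remainder in the parameter $x$ on compacts — yields the integral $\sim \sqrt{2\pi/A}\,g(x,1)$, hence the first line of~\eqref{DensqrA}. The $M$-marginal is handled symmetrically: now the relevant minimizer is that of $\Phi$, namely $x^*=\varrho/(1-\varrho)$ (from~\eqref{ExpPhi'}), with $\Phi(x^*)=0$ and $\Phi''(x^*)=1/(x^*(x^*+1))=(1-\varrho)^2/\varrho$, and $g(x^*,y)=(1-\varrho)\big/\big(\sqrt{\varrho y}\,((1-\varrho)(x^*+y))^{c}\big)$ with $c=\nu/\theta$, because the exponential factor of $g$ vanishes at $x=x^*$. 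Applying~\eqref{Lapla0} with $h=\Phi$, $r^*=x^*$ and simplifying produces the second line of~\eqref{DensqrA}.

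The one point requiring care is the passage from $\int_0^\infty$ to a genuine Laplace integral: Theorem~\ref{T2} controls $\mathbf{p}_A$ only for fixed interior arguments, with uniformity on compact subsets, so one must first restrict the integral to a fixed neighborhood $[1-\delta,1+\delta]$ of the minimizer (respectively $[x^*-\delta,x^*+\delta]$), where the expansion~\eqref{Dev0} holds uniformly, and then show that the complementary range contributes a term of strictly smaller exponential order. The latter follows from $\Psi(y)>0$ (resp. $\Phi(x)>0$) off the minimizer, combined with any crude a priori control on the stationary mass — for instance the $M/M/\infty$ stochastic domination of $M(\infty)$ recalled in Section~\ref{TTS} to dispatch the tail $y\to\infty$, and plain positivity near the boundary — so that the discarded part is $O(e^{-A\varepsilon})$ relative to the main contribution. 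This localization is the main (and essentially the only) obstacle; once it is in place, the remaining work is the elementary substitutions and simplifications indicated above.
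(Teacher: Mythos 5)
Your proposal is correct and follows essentially the same route as the paper's own proof: integrate the joint asymptotics~\eqref{AsymptPA} of Theorem~\ref{T2} over the complementary variable and apply the Laplace expansion~\eqref{Lapla0} at the interior minimum ($y^*=1$ of $\Psi$, respectively $x^*$ of $\Phi$), with the evaluations $g(x,1)=(1-\varrho)\sqrt{(x+1)/x}$, $\Psi''(1)=1$, $g(x^*,y)$ and $\Phi''(x^*)=(1-\varrho)^2/\varrho$ exactly as the paper uses them. The only difference is that you make explicit the Riemann-sum identity linking the marginal to $\int \mathbf{p}_A$ and the localization of the integral near the minimizer, points the paper leaves implicit.
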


\begin{proof}
	Integrating expression~\eqref{AsymptPA} with respect to variable $y > 0$ and applying the Laplace expansion~\eqref{Lapla0} at the unique minimum of function $\Psi$ at point $y = y^*$, asymptotics~\eqref{DensqrA} for $N(\infty)$ follows. Integrating in turn~\eqref{AsymptPA} with respect to variable $x > 0$ and applying Laplace expansion~\eqref{Lapla0} at the unique minimum of function $\Phi$ at point $x = x^*$, asymptotics~\eqref{DensqrA} for $M(\infty)$ is similarly derived from Theorem~\ref{T2}.
\end{proof}

\begin{remark}
In a way similar to that used in this Section, sharp asymptotics of density 
$\mathbf{p}_A$ on the boundary $\{(x,0), x \geqslant 0\} \cup \{(0,y), 
y \geqslant 0\}$ could be derived from equations~\eqref{Kol1abc}--\eqref{Kol1c}. Such evaluations are not needed in the present study and we only sketch the resolution procedure. For $x > 0$ and $y > 0$, asymptotic matching arguments can be first invoked to set
$$
\mathbf{p}_A(x,0) \sim \frac{A^{\frac{3}{2}}}{2\pi} \cdot \varphi(x) \, 
e^{-A(\Phi(x)+\Psi(0))}, \quad
\mathbf{p}_A(0,y) \sim \frac{A^{\frac{3}{2}}}{2\pi} \cdot \psi(x) \, 
e^{-A(\Phi(0)+\Psi(y))}
$$
for some functions $\varphi$, $\psi$, together with
$$
\mathbf{p}_A\left( x,\frac{1}{A} \right) \sim \frac{A^{\frac{5}{2}}}{2\pi} 
\cdot \varphi_1(x) \, e^{-A(\Phi(x)+\Psi(0))}, \quad
\mathbf{p}_A\left( \frac{1}{A},y \right) \sim \frac{A^{\frac{5}{2}}}{2\pi} 
\cdot \psi_1(y) \, e^{-A(\Phi(0)+\Psi(y))}
$$
where $\varphi_1$, $\psi_1$ can be derived from~\eqref{AsymptPA}. Each equation~\eqref{Kol1abc} then provides the respective solution for $\varphi$ and 
$\psi$ by identifying $O(1)$ terms for large $A$. The last equation~\eqref{Kol1c} gives the final asymptotics for $\pmb{\Pi}_A(0,0)$.
\end{remark}


\section{Proof of Theorem~\ref{T0}}
\label{Sec:EOB}


Define the generating function $\mathrm{F}_A$ of the pair $(N(\infty),M(\infty))$ by 
\begin{equation}
\mathrm{F}_A(\UU,\VV) = \mathbb{E} \left( \UU^{N(\infty)}\VV^{M(\infty)} \right), \qquad 
(\UU,\VV) \in \mathbb{D} \times \mathbb{D},
\label{DefFA}
\end{equation}
where $\mathbb{D}$ is the open unit disk. The sharp asymptotics for $\pmb{\Pi}_A$ stated in Theorem~\ref{T2} in the interior quarter-plane $\mathbb{R}^{+*} \times \mathbb{R}^{+*}$ are now applied to obtain estimates for generating function $\mathrm{F}_A$ in a relevant domain. As a preamble, we first show that $\mathrm{F}_A$ has an analytic continuation from the product $\mathbb{D} \times \mathbb{D}$ to a larger domain containing a neighborhood of point $(\UU,\VV) = (1,1)$.

\begin{lemma} 
Given $\varrho < 1$, the generating function $\mathrm{F}_A$ can be analytically extended to the product domain
\begin{equation}
\pmb{\Omega} = \mathbb{D} \left(0,\frac{1}{\varrho} \right) \times \mathbb{C}
\label{DefOmega}
\end{equation}
where $\mathbb{D}(0,\frac{1}{\varrho})$ is the open disk centered at $\UU = 0$ and with radius $1/\varrho$.
\label{L0}
\end{lemma}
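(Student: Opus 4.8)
The plan is to show that the double power series $\sum_{n,m\geq 0}\pmb{\Pi}_A(n,m)\,\UU^n\VV^m$ converges absolutely at every point of $\pmb{\Omega}$. Once this is established, the Weierstrass $M$-test shows that it defines a function holomorphic on the (connected) domain $\pmb{\Omega}$, and since this series represents $\mathrm{F}_A$ on $\mathbb{D}\times\mathbb{D}\subset\pmb{\Omega}$ (where it reduces to the definition~\eqref{DefFA} of the probability generating function), it is the desired analytic continuation. Absolute convergence of the series at $(\UU,\VV)$ is exactly the statement $\mathbb{E}\bigl(|\UU|^{N(\infty)}|\VV|^{M(\infty)}\bigr)<\infty$; since $|\UU|^n|\VV|^m\leq r^n s^m$ with $r=\max(|\UU|,1)$ and $s=\max(|\VV|,1)$, it suffices to prove that $\mathbb{E}\bigl(r^{N(\infty)}s^{M(\infty)}\bigr)<\infty$ for every $r\in(0,1/\varrho)$ and every $s\geq 1$. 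Note that this uses only the stability assumption $\varrho<1$, as stated, and not the asymptotic expansion~\eqref{AS0}.

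First I would control the marginal law of $M(\infty)$ by a pathwise coupling with an $M/M/\infty$ queue. An $M$-customer leaves at rate $\theta+\nu/(n+m)\geq\theta$; attaching to each $M$-customer, upon arrival, an independent $\mathrm{Exp}(\theta)$ clock and letting it be removed at the minimum of that clock and its processor-sharing service completion, one constructs, driven by the \emph{same} Poisson($\beta$) arrival process, a dominating queue $\bar M$ in which every customer lives exactly for its $\mathrm{Exp}(\theta)$ clock, i.e.\ an $M/M/\infty$ queue with arrival rate $\beta=A\theta$ and per-customer death rate $\theta$, and $M(t)\leq\bar M(t)$ for all $t\geq 0$. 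Letting $t\to\infty$ gives $M(\infty)\leq_{\mathrm{st}}\bar M(\infty)$, which is Poisson with mean $\beta/\theta=A$, whence $\mathbb{E}\bigl(s^{M(\infty)}\bigr)\leq\mathbb{E}\bigl(s^{\bar M(\infty)}\bigr)=e^{A(s-1)}<\infty$ for every $s>0$.

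Next I would bound the marginal of $N(\infty)$ by a Foster--Lyapunov argument. Fix $r\in[1,1/\varrho)$ and choose $\lambda>1$ close enough to $1$ that $\beta(\lambda-1)<(r-1)\mu(1/r-\varrho)$, which is possible because $r<1/\varrho$ makes the right-hand side positive. With $V(n,m)=r^n\lambda^m$ a direct computation gives
\[ \Omega(V)(n,m)=V(n,m)\Bigl[\alpha(r-1)+\beta(\lambda-1)+\frac{\mu n}{n+m}(r^{-1}-1)+\Bigl(\frac{\nu m}{n+m}+\theta m\Bigr)(\lambda^{-1}-1)\Bigr], \]
and since the two fractional terms are nonpositive, $\Omega(V)(n,m)\leq V(n,m)\bigl[\alpha(r-1)+\beta(\lambda-1)+\theta m(\lambda^{-1}-1)\bigr]$ on all of $\mathbb{N}^2$. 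The bracket is $\leq-1$ as soon as $m\geq M_0$ for a suitable $M_0$ (the $\theta m(\lambda^{-1}-1)$ term dominates), while for $m<M_0$ and $n\to\infty$ it converges to $\alpha(r-1)+\beta(\lambda-1)-\mu(1-r^{-1})<0$ by the choice of $\lambda$; hence there are $\varepsilon>0$, a finite set $C\subset\mathbb{N}^2$ and $b<\infty$ with $\Omega(V)\leq-\varepsilon V+b\mathbf{1}_C$. The process being non-explosive ($M$ is dominated by $\bar M$ and $N$ has jump rate at most $\alpha+\mu$), a standard Foster--Lyapunov argument (Dynkin's formula on truncations together with Fatou's lemma) yields $\mathbb{E}\bigl(r^{N(\infty)}\lambda^{M(\infty)}\bigr)\leq b\,\pmb{\Pi}_A(C)/\varepsilon<\infty$, and in particular $\mathbb{E}\bigl(r^{N(\infty)}\bigr)<\infty$ for every $r\in[1,1/\varrho)$, trivially also for $r\in(0,1)$.

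Finally I would combine the two marginal bounds by Hölder's inequality: for $r\in(1,1/\varrho)$ and $s\geq 1$ pick $p>1$ with $r^p<1/\varrho$ and its conjugate exponent $q$, so that $\mathbb{E}\bigl(r^{N(\infty)}s^{M(\infty)}\bigr)\leq\mathbb{E}\bigl(r^{pN(\infty)}\bigr)^{1/p}\mathbb{E}\bigl(s^{qM(\infty)}\bigr)^{1/q}<\infty$ by the two preceding steps (the case $r\leq 1$ following directly from the bound on $M$). This gives absolute convergence of the double series throughout $\pmb{\Omega}$ and finishes the proof. The main obstacle is the Lyapunov step for $N$: because the service rate $\mu n/(n+m)$ of the $N$-component degenerates to $0$ when $m\gg n$, no purely geometric test function in $n$ has negative drift off a finite set, forcing the factor $\lambda^m$ with $\lambda>1$ so that the impatience term $\theta m(\lambda^{-1}-1)$ restores negativity for large $m$; yet $\lambda$ must stay close to $1$, since a large $\lambda$ reintroduces positive drift through $\beta(\lambda-1)$ in the region of moderate $m$ and large $n$ — and it is precisely the Hölder step that then lets one reach arbitrarily large values of $s$.
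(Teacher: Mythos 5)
Your proof is correct, but it takes a genuinely different route from the paper's in two of its three steps. The first step is the same: both you and the authors couple $M$ with an $M/M/\infty$ queue fed by the same Poisson($\beta$) stream so that $M(\infty)$ is stochastically dominated by a Poisson($A$) variable, which handles the $\VV$-direction for all of $\mathbb{C}$. For the $\UU$-direction, the paper does not use a drift argument: it sums the stationary Kolmogorov equations~\eqref{Kol0} over $m$ to get the exact identity $\alpha\,\mathbf{Q}_A(n)=\mu(n+1)\sum_m \pmb{\Pi}_A(n+1,m)/(n+m+1)$ for the marginal $\mathbf{Q}_A(n)=\mathbb{P}(N(\infty)=n)$, bounds the error term via $\mathbb{E}(M(\infty)\mid N(\infty)=n+1)\leqslant A$ (again from the coupling), and concludes $\mathbf{Q}_A(n+1)/\mathbf{Q}_A(n)\to\varrho$, hence convergence on $\mathbb{D}(0,1/\varrho)\times\mathbb{D}$; it then fills in the remaining quadrant by invoking the logarithmic convexity of the domain of absolute convergence of a two-variable power series (Reinhardt-domain convexity, \cite{CHA90}). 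Your Foster--Lyapunov step with $V(n,m)=r^n\lambda^m$ replaces the exact balance-equation ratio argument, and your H\"older interpolation is essentially the elementary coefficient-level proof of the log-convexity the paper cites, so the final combination is morally the same. What each buys: the paper's identity extracts the sharp geometric decay rate $\varrho$ of the $N(\infty)$-tail with almost no machinery beyond stationarity, whereas your drift argument is more robust (it never uses the specific structure of the balance equations) at the cost of the standard but nontrivial justification that $\Omega V\leqslant-\varepsilon V+b\mathbf{1}_C$ plus non-explosiveness yields $\mathbb{E}_{\pmb{\Pi}_A}(V)<\infty$ (Meyn--Tweedie-type truncation/Fatou argument), which you only sketch. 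Two small points to fix: at $r=1$ your required inequality $\beta(\lambda-1)<(r-1)\mu(1/r-\varrho)$ cannot hold since the right-hand side vanishes, so restrict the Lyapunov step to $r\in(1,1/\varrho)$ (the endpoint $r=1$ being trivial); and the passage from absolute convergence to holomorphy is better phrased via Abel's lemma (nonnegative coefficients plus finiteness of $\mathbb{E}(r^{N(\infty)}s^{M(\infty)})$ give locally uniform convergence on the corresponding polydiscs covering $\pmb{\Omega}$) than via the Weierstrass $M$-test alone.
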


\noindent
The proof is detailed in Appendix~\ref{PL0}. The main steps sum up as follows: a sample path property of process $M$ first ensures the existence of $F_A(\UU,\VV)$ for all $(\UU,\VV) \in \mathbb{D} \times \mathbb{C}$; an estimate of the marginal distribution of $N(\infty)$ justifies in turn the existence for 
$(\UU,\VV) \in \mathbb{D} \left ( 0,1/\varrho \right) \times \mathbb{D}$; finally, a convexity property of the convergence domain of power series $\mathrm{F}_A(\UU,\VV)$ concludes for its finiteness over 
$\pmb{\Omega}$. 

Now, consider the open subset $\pmb{\Omega}' \subset \pmb{\Omega}$ defined by
$$
\pmb{\Omega}' = \pmb{\Omega} \setminus 
\left\{u, \; u \in \left ( - 1/\varrho,0\right] \right\} \times 
\{v, \; v \in \; (-\infty,0]\}
$$
(we have thus excluded the non positive real points $(\UU,\VV)$ from $\pmb{\Omega}$). Using Theorems~\ref{T1} and~\ref{T2}, we can then assert the following.

\begin{prop}
Given $\varrho < 1$ and the assumptions of Theorem \ref{T2}, the generating function 
$\mathrm{F}_A$ of the pair $(N(\infty), M(\infty))$ is asymptotic for large $A$ to
\begin{multline}
\mathrm{F}_A(\UU,\VV) \sim \, \left ( \frac{1-\varrho}{1-\varrho r}\right )^A 
\, e^{A(s-1)} \, \exp \left [ iA \left ( \frac{\varrho r \zeta}{1-\varrho r} + 
s \eta \right ) \right ] \;\\
\times \, \exp \left [ - \frac{A}{2} \left ( \frac{\varrho r \zeta^2}{(1 - \varrho r)^2} + 
s \eta^2 \right ) \right ] \, G_0(\UU,\VV)
\label{HEDP2}
\end{multline}
for $(\UU,\VV) \in \pmb{\Omega}'$, where we set $\UU = r \, e^{i\zeta}$, $0 < r < 1/\varrho$, 
$\zeta \in \; (-\pi,\pi)$, and $v = s \, e^{i\eta}$, $s > 0$, 
$\eta \in \; (-\pi,\pi)$, respectively and where the continuous function $G_0$ is given by
$$
G_0(\UU,\VV) = \left ( \frac{1-\varrho}{1-\varrho r}\right ) 
\bigl [ s + \varrho r (1-s) \bigr ]^{\frac{\alpha}{\theta}(1-r)-c} 
$$
with $G_0(1,1) = 1$.
\label{P3}
\end{prop}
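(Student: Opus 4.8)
The plan is to substitute the sharp local asymptotics of Theorem~\ref{T2} into the defining series \eqref{DefFA} and to evaluate the resulting exponential sum by the Laplace expansion \eqref{Lapla0BIS}, proceeding as in the proof of Corollary~\ref{C1} but now carrying the extra weights $\UU^n$ and $\VV^m$. Write $\UU = r\,e^{i\zeta}$ and $\VV = s\,e^{i\eta}$ with $0 < r < 1/\varrho$, $s > 0$ and $\zeta,\eta \in (-\pi,\pi)$; this is precisely the parametrization that is valid on $\pmb{\Omega}'$, which is why the non-positive reals are excluded. Then $\mathrm{F}_A(\UU,\VV) = \sum_{n,m\ge 0}\pmb{\Pi}_A(n,m)\,e^{n\log r + in\zeta}\,e^{m\log s + im\eta}$. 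Replacing $\pmb{\Pi}_A(n,m)$ by its value from Theorem~\ref{T2} and using Theorem~\ref{T1} to write $H = \Phi+\Psi$, the ``bulk'' part of this sum (say $n,m\ge 1$) is, with $x = n/A$, $y = m/A$, a Riemann sum for
\[ \frac{A}{2\pi}\iint_{\mathbb{R}^{+*}\times\mathbb{R}^{+*}} g(x,y)\,\exp\!\big[-A\big(\Phi_r(x)+\Psi_s(y)\big) + iA(\zeta x + \eta y)\big]\,\mathrm dx\,\mathrm dy, \]
where I set $\Phi_r(x) := \Phi(x) - x\log r$ and $\Psi_s(y) := \Psi(y) - y\log s$.

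The second step is to locate the saddle point and to discard the negligible contributions. From \eqref{eq:Phi}--\eqref{eq:Psi} one computes $\Phi_r'(x) = \log\frac{x}{\varrho r(x+1)}$ and $\Psi_s'(y) = \log(y/s)$, so $\Psi_s$ attains its unique minimum at the interior point $y = s$, with $\Psi_s(s) = 1-s$ and $\Psi_s''(s) = 1/s$, while $\Phi_r$ attains its unique minimum at $x_r := \varrho r/(1-\varrho r)$; here the hypothesis $r < 1/\varrho$ is essential, both to ensure $x_r \in (0,+\infty)$ and that $\Phi_r$ is increasing at infinity, and it yields $x_r + 1 = 1/(1-\varrho r)$, $\Phi_r(x_r) = \log\frac{1-\varrho r}{1-\varrho}$ and $\Phi_r''(x_r) = (1-\varrho r)^2/(\varrho r)$. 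A comparison of exponential rates — e.g. $-\log(1-\varrho) > \log(1-\varrho r) - \log(1-\varrho)$ on the face $\{n=0\}$, which is exactly $\varrho r < 1$, and similarly $\Phi_r(x)+\Psi_s(y)\to+\infty$ as $n+m\to\infty$ — shows that the faces $\{n=0\}\cup\{m=0\}$ and the tail $n+m\to\infty$ contribute strictly less in exponential order than $\Phi_r(x_r)+\Psi_s(s)$; hence the failure of the pointwise estimate of Theorem~\ref{T2} near the coordinate axes is harmless, and the Riemann sum may be replaced by the integral at the cost of a relative $O(1/A)$ error (alternatively one may invoke the boundary asymptotics sketched after Corollary~\ref{C1}).

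The third step is to evaluate the integral. Applying \eqref{Lapla0BIS} iteratively — first in $y$ around $y = s$, then in $x$ around $x = x_r$, the $O(1/A)$ remainders being uniform by the last sentence of Section~\ref{Sec:SCH} — produces the exponential factor $\big(\frac{1-\varrho}{1-\varrho r}\big)^A e^{A(s-1)}$ from the minimum values, the oscillatory factor $\exp\!\big[iA(\zeta x_r + s\eta)\big] = \exp\!\big[iA(\frac{\varrho r\zeta}{1-\varrho r} + s\eta)\big]$, the Gaussian factor $\exp\!\big[-\tfrac A2\big(\frac{\zeta^2}{\Phi_r''(x_r)} + \frac{\eta^2}{\Psi_s''(s)}\big)\big] = \exp\!\big[-\tfrac A2\big(\frac{\varrho r\zeta^2}{(1-\varrho r)^2} + s\eta^2\big)\big]$, and the prefactor $\frac{A}{2\pi}\sqrt{\frac{2\pi s}{A}}\,\sqrt{\frac{2\pi\varrho r}{A(1-\varrho r)^2}}\;g(x_r,s) = \frac{\sqrt{s\varrho r}}{1-\varrho r}\,g(x_r,s)$. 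It then remains to simplify $g(x_r,s)$: using $x_r+1 = 1/(1-\varrho r)$, $x_r + s = (s+\varrho r(1-s))/(1-\varrho r)$, $(x_r - x^*)/(x_r+1) = \varrho(r-1)/(1-\varrho)$ and $\mu\varrho/\theta = \alpha/\theta$, the square-root factor of $g$ cancels the $\sqrt{s\varrho r}$, the surviving constant is $(1-\varrho)/(1-\varrho r)$, and the two remaining powers of $(x_r+1)/(x_r+s) = 1/[s+\varrho r(1-s)]$ collapse the prefactor to $G_0(\UU,\VV) = \frac{1-\varrho}{1-\varrho r}\,[s+\varrho r(1-s)]^{\frac{\alpha}{\theta}(1-r)-c}$, which is \eqref{HEDP2}; setting $r = s = 1$ gives $G_0(1,1) = 1$. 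The main obstacle is the second step: making rigorous the passage from the discrete sum to the Laplace integral, i.e. combining the discretization error with the exponential concentration and controlling the neighborhoods of the axes and of infinity where Theorem~\ref{T2} provides no uniform estimate. The comparison-of-rates argument settles this, but only because $\varrho r < 1$ holds throughout $\pmb{\Omega}'$ — which is also the reason the analyticity domain $\pmb{\Omega}$ of Lemma~\ref{L0} has the form $\mathbb{D}(0,1/\varrho)\times\mathbb{C}$.
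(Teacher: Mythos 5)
Your proposal is correct and follows essentially the same route as the paper's Appendix~C: insert the sharp asymptotics of Theorem~\ref{T2} into the series, approximate the interior sum by a Riemann integral, apply the complex Laplace expansion~\eqref{Lapla0BIS} at the saddle $(X_\UU,Y_\VV)=(\varrho r/(1-\varrho r),s)$, simplify $g(X_\UU,Y_\VV)/\sqrt{\Phi_r''\Psi_s''}$ to $G_0$, and dispose of the boundary terms $\{n=0\}\cup\{m=0\}$ and the origin by comparing exponential rates via Theorem~\ref{T1}, exactly as the paper does with its decomposition $\mathrm{F}_A=I+J+K+L$. The only nitpick is your parenthetical that the face comparison ``is exactly $\varrho r<1$'': the inequality $-\log(1-\varrho)>\log(1-\varrho r)-\log(1-\varrho)$ amounts to $\log(1-\varrho r)<0$, which holds automatically once $0<\varrho r<1$ (the condition $\varrho r<1$ being what guarantees the interior saddle exists at all), a harmless imprecision.
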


\noindent
The proof is detailed in Appendix~\ref{PL1}.

\begin{remark}
After the general result~\eqref{Lapla0BIS}, asymptotics~\eqref{HEDP2} can be also specified by stating a remainder term of order $O(1/A)$ for large $A$ which tends to 0, uniformly with respect to variable 
$(\UU,\VV)$ pertaining to any compact subset of domain~$\pmb{\Omega}'$.
\label{Runif}
\end{remark}

We can now proceed with the proof of Theorem~\ref{T0}. 

\begin{proof} [Proof of Theorem~\ref{T0}] 
First address the weak convergence of the pair $(\xi_A,\eta_A)$. Let $\mathrm{L}_A$ be the characteristic function of random variable $(\xi_A,\eta_A)$; we have
\begin{align}
	\mathrm{L}_A(\sigma,\tau) = & \, 
	\mathbb{E} \left ( \exp \left [ i\sigma \sqrt{A} 
	\left ( \frac{N(\infty)}{A} - x^* \right ) + i \tau \sqrt{A} 
	\left ( \frac{M(\infty)}{A} - 1 \right ) \right ] \right )
	\label{CV0} \\ 
	= & \, e^{-i\sqrt{A}(\sigma \, x^*+ \tau)} \, 
	\mathrm{F}_A ( e^{\frac{i\sigma}{\sqrt{A}}}, e^{\frac{i \tau}{\sqrt{A}}} )
	\nonumber
	\end{align}
for all $(\sigma,\tau) \in \mathbb{R}^2$, with $i^2 = -1$. Apply then Proposition~\ref{P3} to the point $(\UU,\VV)$ with $\UU = \exp(i\sigma/\sqrt{A})$ and $\VV = \exp(i\tau/\sqrt{A})$, pertaining to a neighborhood of $(1,1)$ for large enough $A$. We clearly have $r = \vert \UU \vert = 1$ and 
	$s = \vert \VV \vert = 1$, so that $G_0(\UU,\VV) = 1$ and asymptotics~\eqref{HEDP2} presently reduces to 
	\begin{multline*}
	\mathrm{F}_A ( e^{\frac{i\sigma}{\sqrt{A}}}, e^{\frac{i \tau}{\sqrt{A}}} ) =\\
	\exp \left [ iA \left ( x^* \sigma + \tau \right ) \frac{1}{\sqrt{A}} \right ] 
	\cdot 
	\exp \left [ - \frac{A}{2} 
	\left ( \frac{\sigma^2}{A} \, \frac{\varrho}{(1-\varrho)^2} + \frac{\tau^2}{A} 
	\right ) \right ] \; \times
	 \, \left [ 1 + O \left ( \frac{1}{A} \right ) \right ]
	\end{multline*}
	where, after Remark~\ref{Runif}, the remainder term $O(1/A)$ tends to 0 uniformly in variables 
	$(\sigma,\tau)$ (in fact, the pair $(\UU,\VV) = ( e^{i\sigma/\sqrt{A}}, e^{i \tau/\sqrt{A}} )$ pertains to a compact neighborhood of point $(1,1) \in \pmb{\Omega}'$ for large enough $A$). It then follows that 
	\begin{equation}
	\mathrm{F}_A ( e^{\frac{i\sigma}{\sqrt{A}}}, e^{\frac{i \tau}{\sqrt{A}}} ) \sim 
	e^{+i\sqrt{A}(\sigma \, x^*+ \tau)} \; 
	\exp \left [ - \frac{1}{2} 
	\left ( \frac{\varrho \sigma^2}{(1-\varrho)^2} + \tau^2 \right ) \right ]
	\label{CV1}
	\end{equation}
	for large $A$ and any given $(\sigma,\tau) \in \mathbb{R}^2$. By equality~\eqref{CV0} and estimate~\eqref{CV1}, we thus derive that
	$$
	\lim_{A \uparrow +\infty} \mathrm{L}_A(\sigma,\tau) = 
	\exp \left [ - \frac{1}{2} 
	\left ( \frac{\varrho \sigma^2}{(1-\varrho)^2} + \tau^2 \right ) \right ], 
	\qquad (\sigma,\tau) \in \mathbb{R},
	$$
	which limit defines the characteristic function of the Gaussian distribution with covariance matrix given in Theorem~\ref{T0}. By L\'evy's continuity Theorem~\cite[Chap.19, Theorem 19.1]{Jacod04}, we conclude that the scaled random variable $(\xi_A,\eta_A)$ converges weakly towards this Gaussian distribution.
	
Finally consider the estimation of expectations $\mathbb{E}(N(\infty))$ and $\mathbb{E}(M(\infty))$. Note that, in general, the latter weak convergence of $(\xi_A,\eta_A)$ does not necessarily imply that $\mathbb{E}(\xi_A) \to \mathbb{E}(\xi)$ and $\mathbb{E}(\eta_A) \to\mathbb{E}(\eta)$. Presently, however, we can directly rely on the asymptotics of Corollary (\ref{C1}) for the marginal distributions of $N(\infty)$ to write
$$
\mathbb{E}(N(\infty)) = \sum_{n \geqslant 0} n \, \mathbb{P}(N(\infty) = n) \sim 
\int_0^{+\infty} (A \, x) \, \mathbb{P}(N(\infty) = A \, x) \, A \, \mathrm{d}x
$$
for large $A$, after estimating the discrete sum by a Riemann integral with integral step $1/A$; using asymptotics (\ref{DensqrA}) for $\mathbb{P}(N(\infty) = Ax)$, the latter then entails
$$
\mathbb{E}(N(\infty)) \sim 
A^2 \left ( \frac{1-\varrho}{\sqrt{2\pi A}}\right )
\int_0^{+\infty} \sqrt{x(x+1)}\, e^{-A \cdot \Phi(x)} \, \mathrm{d}x.
$$
Applying the Laplace asymptotics (\ref{Lapla0}) to the latter integral, with the minimum of $\Phi$ located at $x = x^*$ with $\Phi(x^*) = \Phi'(x^*) = 0$ and $\Phi''(x^*) = (1-\varrho)^2/\varrho$, we readily obtain $\mathbb{E}(N(\infty)) \sim A \, x^*$ as claimed. A similar calculation provides $\mathbb{E}(M(\infty)) \sim A \, y^*$ for large $A$.
\end{proof}

\section{Conclusion}


In this paper, sharp large deviations asymptotics and limit theorems for the stationary queue occupancy distribution have been derived for the Processor-Sharing queue with both patient and impatient customers, in the case when the normalized arrival rate $A$ of impatient customers grows to infinity. On mathematical ground, the asymptotic setting is a new case of \textit{singular pertubation} for the underlying bi-dimensional birth-and-death process where the time scale of one component is accelerated while that of the other component is kept fixed. As no general large deviations principle is available for such a Markov process with discrete state space, the sharp asymptotics have been obtained by assuming an expansion of the form 
$$
\mathbf{p}_A = \frac{A}{2\pi} \, e^{-A \cdot H} \, \left ( g + \frac{g_1}{A} + \cdots \right ), 
\qquad A \uparrow +\infty, 
$$
for the scaled solution $\mathbf{p}_A$ to Kolmogorov equations. We have shown how unknown functions $H$, $g$, \dots\ can be iteratively determined. 

These results have been applied to the \textit{closed-loop} PS queue fed back by the flow of impatient customers with still uncompleted service. Unlike the common queueing systems with growth 
$1/(1-\varrho_\tot)^\alpha$ in high load condition for some $\alpha > 0$, this closed-loop PS queue has been shown to exhibit a slower logarithmic growth $-\log(1-\varrho_\tot)$ in the high load regime. In performance terms, the account of the so-called moving users is beneficial to the system behavior and the throughput of each user class decays less fast in case of congestion, as per estimates~\eqref{Throughps}.

The present approach offers generalizations when extended to queuing systems with a state space with higher dimension. Specifically, consider the PS queue with a number $K$ of patient or impatient customer classes, with arrival rate $\alpha_k$, service rate $\mu_k$ and impatient rate $\theta_k \geqslant 0$ for class $k \in \{1,\ldots,K\}$. This system should be amenable to the techniques applied in the present paper when the arrival rate $\alpha_k$, with $\theta_k \neq 0$, of some class $k$ of impatient customers tends to infinity proportionally to a dimensionless parameter $A$. While the present approach has directly considered asymptotics for the solution of the Kolmogorov equations in dimension $K = 2$, an alternative approach for $K > 2$ consists in deriving asymptotics for the generating function $\mathrm{F}_A$ of the queue occupancy $(N_1,\ldots,N_K)$. In fact, it can be easily shown from system~\eqref{Kol0} that $\mathrm{F}_A$ verifies the integro-differential equation
\begin{multline*}
\left [ \sum_{k=1}^K \alpha_k(1-\UU_k)\right ] 
\mathrm{F}_A(\mathbf{u}) + \sum_{k=1}^K \theta_k \, (\UU_k-1) \frac{\partial \mathrm{F}_A}{\partial \UU_k}(\mathbf{u}) \; = \\
\int_0^1 \left [ \sum_{k=1}^K \mu_k(1-\UU_k) 
\frac{\partial \mathrm{F}_A}{\partial \UU_k}(t\mathbf{u}) \right ] \mathrm{d}t, 
\qquad \mathbf{u} = (\UU_1,\ldots,\UU_K) \in \mathbb{D}^K, 
\end{multline*}
with $\mathrm{F}_A(1,\ldots,1) = 1$. In some extended analyticity domain $\pmb{\Omega} \supset 
\mathbb{D}^K$, an expansion
$$
\mathrm{F}_A = e^{-A \cdot G} \, \left ( G_0 + \frac{G_1}{A} + \cdots \right )
$$
for $\mathrm{F}_A$ could then be determined through the latter equation and provide general information on the corresponding multivariate queue distribution.

\appendix

\section{Derivation of decay rate $K$}
\label{PL1bis}
In this appendix, we prove that the component $\Phi$ of $H$ is the decay rate related to the single-server PS queue with $A$ permanent customers, arrival rate $\alpha$ and service rate $\mu$, as was claimed in Section~\ref{sec:model-main-results}. More generally, assume $\varrho = \alpha/\mu < 1$ and let $\mathbf{E}_m$ denote the stationary distribution of the single-server PS queue with a fixed number $m$ of permanent customers in queue, arrival rate $\alpha$ and service rate $\mu$.

\begin{lemma} \label{lemma:K}
	Consider $x = O(1)$ and $y = O(1)$. We then have
	\begin{equation}
	\lim_{A \uparrow +\infty} \frac{1}{A} \cdot \log \mathbf{E}_{Ay}(Ax) = - K(x,y)
	\label{PS_GenAsympt}
	\end{equation}
where
	$$
	K(x,y) = x \log \left ( \frac{x}{\varrho} \right ) + y \log y - 
	(x+y)\log(x+y) - y \log(1-\varrho).
	$$
\end{lemma}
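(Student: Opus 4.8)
The plan is to solve for $\mathbf{E}_m$ explicitly and then apply Stirling's formula. Conditionally on the $m$ permanent customers, the number of transient customers in the single-server PS queue is a birth-and-death chain on $\mathbb{N}$ with constant birth rate $\alpha$ and death rate $\mu n/(n+m)$ in state $n\ge 1$; since this death rate tends to $\mu$ as $n\to\infty$ and $\varrho=\alpha/\mu<1$, the chain is positive recurrent, so $\mathbf{E}_m$ is well defined. Its detailed balance equations $\alpha\,\mathbf{E}_m(n)=\frac{\mu(n+1)}{n+m+1}\mathbf{E}_m(n+1)$ yield, by a telescoping product, $\mathbf{E}_m(n)=\mathbf{E}_m(0)\,\varrho^{n}\binom{n+m}{m}$, and the identity $\sum_{n\ge 0}\binom{n+m}{m}\varrho^{n}=(1-\varrho)^{-(m+1)}$ fixes $\mathbf{E}_m(0)=(1-\varrho)^{m+1}$. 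Hence
\[
\mathbf{E}_m(n)=(1-\varrho)^{m+1}\,\varrho^{n}\,\binom{n+m}{m},\qquad n\ge 0 .
\]

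I would then take $m=[Ay]$, $n=[Ax]$, pass to $\tfrac1A\log$, and estimate the three resulting terms. The elementary ones give $\tfrac1A\big[(m+1)\log(1-\varrho)+n\log\varrho\big]\to y\log(1-\varrho)+x\log\varrho$. For the binomial coefficient I would use Stirling in the form $\log k!=k\log k-k+O(\log k)$ applied to $\binom{n+m}{m}=(n+m)!/(n!\,m!)$: the linear terms cancel since $(x+y)-x-y=0$, the $\log A$ terms cancel for the same reason, and what remains is $\tfrac1A\log\binom{n+m}{m}\to (x+y)\log(x+y)-x\log x-y\log y$.

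Adding the three limits yields
\[
\lim_{A\to\infty}\frac1A\log\mathbf{E}_{Ay}(Ax)=x\log\varrho-x\log x+y\log(1-\varrho)-y\log y+(x+y)\log(x+y),
\]
and, writing $x\log\varrho-x\log x=-x\log(x/\varrho)$, the right-hand side is exactly $-K(x,y)$, which is the claim. There is no genuine obstacle here; the only points deserving a line of justification are that, after division by $A$, the $O(\log k)$ Stirling remainders and the error from replacing $Ax,Ay$ by their integer parts both vanish in the limit, and do so uniformly for $(x,y)$ in any compact subset of the open quadrant, which is the form actually needed later. Specializing to $y=y^*=1$ gives $K(x,1)=\Phi(x)$ with $\Phi$ as in~\eqref{eq:Phi}, which establishes the claim of Section~\ref{sec:model-main-results} that $\Phi$ is the decay rate of the single-server PS queue with $A$ permanent customers; a second-order refinement of the same computation around the mode of $\mathbf{E}_A$ gives in addition the Gaussian fluctuation statement for $\xi$.
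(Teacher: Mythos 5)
Your proposal is correct, and its first half (explicit stationary distribution) coincides with the paper's: the detailed-balance telescoping you perform is exactly the paper's reversibility argument, since $\prod_{k=1}^{n}\bigl(1+\tfrac{m}{k}\bigr)=\binom{n+m}{m}$ and the normalization $\sum_{n\geqslant 0}\binom{n+m}{m}\varrho^{n}=(1-\varrho)^{-(m+1)}$ is the paper's identity~\eqref{Rm}. Where you genuinely diverge is the asymptotic step: you apply Stirling in the crude form $\log k!=k\log k-k+O(\log k)$ directly to the binomial coefficient, observe the cancellation of the $A\log A$ terms, and get the logarithmic limit in a few lines, whereas the paper decomposes $\log\mathbf{W}_{Ay}(Ax)$ via the Weierstrass product for $\Gamma$, the harmonic sum, and an Euler--MacLaurin estimate of the tail sum. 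Your route is shorter and entirely sufficient for the lemma as stated (the integer-part and $O(\log A)$ errors indeed vanish after dividing by $A$, uniformly on compacts of the open quadrant). What the paper's heavier computation buys is the expansion~\eqref{WW} with its constant-order terms, i.e.\ the sharp asymptotics $\mathbf{E}_{Ay}(Ax)\sim\frac{1-\varrho}{\sqrt{2\pi A}}\sqrt{\tfrac{x+y}{xy}}\,e^{-A\,K(x,y)}$ of~\eqref{Exy}, which is what Remark~\ref{Rs} actually uses to verify the Gaussian fluctuation statement for $\xi$; your closing sentence asserts that a ``second-order refinement'' would deliver this, which is true (one needs Stirling with the $\tfrac12\log$ terms retained), but as written your $O(\log k)$ remainders do not yet give that prefactor.
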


Since $K(x,1) = \Phi(x)$ for 
$y = 1$, this indeed shows that $\Phi$ is the decay rate of the single-server PS queue with $A$ permanent customers.

\begin{proof} [Proof of Lemma~\ref{lemma:K}] By a simple reversibility argument for the Markov chain representing the queue occupancy, we first have
	%
	\begin{equation}
	\mathbf{E}_m(n) = \varrho^n 
	\prod_{k=1}^n \left ( 1 + \frac{m}{k} \right ) \times \mathbf{E}_m(0), 
	\qquad n \in \mathbb{N},
	\label{PS_Gen}
	\end{equation}
with $\mathbf{E}_m(0)$ given by the normalization condition. More precisely, $\mathbf{E}_m(0) = 1/R_m(\varrho)$ where 
	\begin{equation}
	R_m(z) = \sum_{n \geqslant 0} \frac{z^n}{n!} \prod_{k=1}^n (k + m) = 
	\frac{1}{(1-z)^{m+1}}, \qquad 0 < z < 1,
	\label{Rm}
	\end{equation}
hence $\mathbf{E}_m(0) = (1-\varrho)^{m+1}$. Now address the estimation of $\mathbf{E}_{Ay}(Ax)$ for large $A$ and fixed $x > 0$, $y > 0$. The logarithm of the product 
	$$
	\mathbf{W}_{Ay}(Ax) = \prod_{1 \leqslant k \leqslant Ax} 
	\left ( 1 + \frac{Ay}{k} \right )
	$$
involved in expression~\eqref{PS_Gen} where $n = Ax$ and $m = Ay$, with $x = O(1)$ and $y = O(1)$, can be written as the sum
	\begin{equation}
	\log \mathbf{W}_{Ay}(Ax) = T(Ay) + Ay \sum_{k=1}^{Ax} \frac{1}{k} - 
	\sum_{k = Ax+1}^{+\infty} \mathbf{g}_{Ay}(k)
	\label{XiA}
	\end{equation}
where we set
	$$
	\mathbf{g}_m(u) = \log \left ( 1 + \frac{m}{u} \right ) - \frac{m}{u}, \qquad 
	T(z) = \sum_{j \geqslant 1} \mathbf{g}_{z}(j)
	$$
for $u \in [1,+\infty)$ and 
$z > 1$, respectively. We successively evaluate each term of the right-hand side of~\eqref{XiA} for large $A$: 

\textbf{a)} by the Weierstrass product formula~\cite[Sect. 5.8.2]{NIST10}, the sum $T(z)$ can be first made explicit in terms of the $\Gamma$ function only, namely
	$$
	T(z) = - \log \Gamma(z) - \gamma z - \log z, \qquad z > 0,
	$$
$\gamma$ denoting the Euler constant. Using this expression of $T(z)$ and the Stirling's asymptotic formula $\log \Gamma (z) = z \log z - z - (\log z)/2 + \log\sqrt{2\pi} + o(1)$ for large positive $z$ \cite[Sect. 5.11.1]{NIST10}, we thus obtain
	\begin{equation}
	T(Ay) = - Ay \cdot \log(Ay) + (1-\gamma)Ay - \frac{1}{2} \log(Ay) - 
	\log\sqrt{2\pi} + o(1);
	\label{T_A}
	\end{equation}

\textbf{b)} besides, the second term in the right-hand side of~\eqref{XiA} is proportional to the harmonic sum, which is known to expand as~\cite[Sect. 2.10.8]{NIST10}
	\begin{equation}
	\sum_{j=1}^{Ax} \frac{1}{j} = \log (Ax) + \gamma + \frac{1}{2Ax} + o(1);
	\label{Harm}
	\end{equation}

\textbf{c)} finally, the last sum in the right-hand side of~\eqref{XiA} can be written via the Euler-MacLaurin formula~\cite[Sect. 2.10.1]{NIST10} in the form
	\begin{multline}
	\sum_{k = Ax + 1}^{+\infty}
	\mathbf{g}_{Ay}(k) = \, 
	\int_{Ax+1}^{+\infty} \mathbf{g}_{Ay}(u) \mathrm{d}u + 
	\frac{\mathbf{g}_{Ay}(+\infty) + \mathbf{g}_{Ay}(Ax+1)}{2}\\
	+ \, \frac{1}{12} \, (\mathbf{g}'_{Ay}(+\infty) - \mathbf{g}'_{Ay}(Ax+1)) + \cdots
	\label{EMLbis0}
	\end{multline}
From the derivative $\mathbf{g}'_m(u) = m^2/u^2(u+m)$, $u \geqslant 1$, we have $\mathbf{g}'_{Ay}(+\infty) = 0$ and $\mathbf{g}'_{Ay}(Ax+1) = o(1)$. Besides, calculating the integral in the right-hand side of~\eqref{EMLbis0} gives
	\begin{align}
	\int_{Ax+1}^{+\infty} \mathbf{g}_{Ay}(u) \mathrm{d}u & \, = 
	\Bigl[ u \, \mathbf{g}_{Ay}(u) \Bigr]_{u=Ax+1}^{+\infty} - 
	\int_{Ax+1}^{+\infty} u \, \mathbf{g}'_{Ay}(u) \mathrm{d}u
	\label{EMLbis1} \\
	& \, = -(Ax+1)\mathbf{g}_{Ay}(Ax+1) - Ay \cdot 
	\log \left ( \frac{Ax+Ay+1}{Ax+1} \right )
	\nonumber
	\end{align}
by using an integration by parts along with the previous expression of $\mathbf{g}'_m(u)$; furthermore, the factor $\mathbf{g}_{Ay}(Ax+1)$ in the right-hand side of~\eqref{EMLbis1} expands as
	$$
	\mathbf{g}_{Ay}(Ax+1) = \log \left ( 1 + \frac{y}{x} \right ) - \frac{y}{x} + 
	\frac{y^2}{x^2 (x+y) A} + o \left ( \frac{1}{A} \right );
	$$
gathering expression~\eqref{EMLbis1} and the former results, the sum~\eqref{EMLbis0} can consequently be evaluated as
	\begin{multline} \label{EMLbis2}
	\sum_{k = Ax + 1}^{+\infty} \mathbf{g}_{Ay}(k) = \; 
	\Bigl \{ -A \left[ x \log \left ( 1 + \frac{y}{x} \right ) - y \right] 
	- \left [ \log \left ( 1 + \frac{y}{x} \right ) - 
	\frac{y}{x} \right ] \Bigr \}\\
	- \frac{y^2}{x (x+y)} 
	- \Bigl \{ A y \log \left ( 1 + \frac{y}{x} \right ) - \frac{y^2}{x(x+y)} \Bigr \} + \frac{1}{2} \left [ \log \left ( 1 + \frac{y}{x} \right ) - \frac{y}{x} 
	\right ] + o(1)
	\end{multline}
after expanding all contributing terms up to order $1/A$. After~\eqref{T_A},~\eqref{Harm} and~\eqref{EMLbis2}, we conclude that the logarithm~\eqref{XiA} expands as 
	\begin{multline}
	\log \mathbf{W}_{Ay}(Ax) = \; -A(y\log y - (x+y)\log(x+y) + x \log x)\\
	- \log(\sqrt{2\pi A y}) \; + \frac{1}{2} \log \left ( 1 + \frac{y}{x} \right ) + o(1).
	\label{WW}
	\end{multline}

Coming back to the expression~\eqref{PS_Gen} of probability $\mathbf{E}_{Ay}(Ax)$, and using the value of $\mathbf{E}_{Ay}(0)$ obtained after~\eqref{Rm} yields
	\begin{align}
	\mathbf{E}_{Ay}(Ax) = & \; \varrho^{Ax} \cdot 
	 \mathbf{W}_{Ay}(Ax) \cdot \mathbf{E}_{Ay}(0) 
	\label{WWbis} \\
	= & \; (1-\varrho) \, 
	\exp \left [ A \, x \log \varrho + \log \mathbf{W}_{Ay}(Ax) + A \, 
	y \log(1-\varrho) \right ];
	\nonumber
	\end{align}
inserting the expansion~\eqref{WW} for $\log \mathbf{W}_{Ay}(Ax)$ into equality (\ref{WWbis}) then provides the limit~\eqref{PS_GenAsympt} with the expected decay rate $K(x,y)$. 
\end{proof}

\begin{remark}
Note for completeness that a sharp asymptotics for $\mathbf{E}_{Ay}(Ax)$ also readily follows from~\eqref{WW}--\eqref{WWbis}, giving
\begin{equation}
	\mathbf{E}_{Ay}(Ax) \sim \frac{1-\varrho}{\sqrt{2\pi A}} 
	\sqrt{\frac{x+y}{x \, y}} \, \exp(-A \cdot K(x,y))
\label{Exy}
\end{equation}
for large $A$. For any real $r = O(1)$, in particular, write 
$x = x^* + r/\sqrt{A}$ and 
$y = 1$; a Taylor expansion then gives $K(x,1) = \Phi(x) = \Phi''(x^*) r^2/2A + o(1/A)$ 
so that, after asymptotics (\ref{Exy}), 
\begin{align}
\mathbf{E}_{A}(Ax^*+ r\sqrt{A}) \sim & \, \frac{1-\varrho}{\sqrt{2\pi A}} \sqrt{\frac{x^*+1}{x^* \cdot 1}} \times e^{-A \cdot K(x,y)}
\nonumber \\
\sim & \, \frac{1-\varrho}{\sqrt{\varrho}} \, \frac{1}{\sqrt{2\pi A}} \cdot \exp \left [ - \, \frac{(1-\varrho)^2}{\varrho} \, \frac{r^2}{2} \right ], \qquad r \in \mathbb{R}.
\label{Exybis}
\end{align}
We thus conclude from (\ref{Exybis}) that the probability $\mathbb{P}(N'(\infty) = Ax^* + r\sqrt{A})$ is asymptotic to $\mathbb{P}(\xi = r)/\sqrt{A}$, where $\mathbb{P}(\xi = r)$ denotes the value of the density function of Gaussian variable $\xi$ at point $r$. This confirms the fact that the centered variable 
$$
\sqrt{A} \left ( \frac{N'(\infty)}{A} - x^* \right ), 
$$
like $\xi_A$, also converges in distribution towards Gaussian variable $\xi$.
\label{Rs}
\end{remark}

\section{Proof of Lemma~\ref{L0}}
\label{PL0}
The proof of Lemma~\ref{L0} proceeds in three steps.

\textbf{(a)} We first prove the analytic continuation of $\mathrm{F}_A$ to the product 
$\mathbb{D} \times \mathbb{C}$. As in Section~\ref{sec:model-main-results}, let $M'(t)$ denote the number of customers in the $M/M/\infty$ queue with Poisson arrival process of rate $\beta$ and i.i.d.\ ``service times'' with exponential distribution of parameter $\theta$. Departures for the occupancy process $M$ for \textit{M}-customers stem from both service completion and departures due to impatience, while the departures for process $M'$ come from impatience only. The random processes $(N,M)$ and $M'$ can be coupled in such a way that $N$ and $M'$ are independent, $M'(0) = M(0)$ and
\[ M(t) \leqslant M'(t), \quad t \geqslant 0. \]
Letting $t \to \infty$, this readily entails that
\[ \mathbb{P}(N(\infty) = n, M(\infty) = m) \leqslant \mathbb{P}(M'(\infty) \geqslant m), \qquad 
(n, m) \in \mathbb{N}^2, \]
where $M'(\infty)$ has a Poisson distribution with parameter $A = \beta/\theta$. The latter inequality ensures that $\mathbb{P}(N(\infty) = n, M(\infty) = m) = O(A^m/m!)$ for large $m$ and the power series defining 
$\mathrm{F}_A(\UU,\VV)$ is thus convergent for all $\UU \in \mathbb{D}$ and $\VV \in \mathbb{C}$. Function $\mathrm{F}_A$ is therefore analytically defined in $\mathbb{D} \times \mathbb{C}$.

\textbf{(b)} We now consider the extension of $\mathrm{F}_A$ to the product 
$\mathbb{D}(0,1/\varrho) \times \mathbb{D}$. Summing all equations~\eqref{Kol0} with respect to index $m \geqslant 0$, we have
$$
\alpha \, \mathbf{Q}_A(n) + \mu \, n \sum_{m \geqslant 0} 
\frac{\pmb{\Pi}_A(n,m)}{n+m} = \alpha \, \mathbf{Q}_A(n-1) + 
\mu \, (n+1) \sum_{m \geqslant 0} 
\frac{\pmb{\Pi}_A(n+1,m)}{n+m+1}
$$
where we set $\mathbf{Q}_A(n) = \sum_{m \geqslant 0} \pmb{\Pi}_A(n,m)$, hence 
\begin{equation}
\alpha \, \mathbf{Q}_A(n) = 
\mu (n+1) \sum_{m \geqslant 0} \frac{\pmb{\Pi}_A(n+1,m)}{n+m+1}, 
\qquad n \in \mathbb{N}.
\label{MargNA}
\end{equation}
Let
\[ \pmb{\varepsilon}_A(n+1) = \frac{1}{\mathbf{Q}_A(n+1)} \, 
\left | (n+1) \sum_{m \geqslant 0} \frac{\pmb{\Pi}_A(n+1,m)}{n+m+1} - 
\mathbf{Q}_A(n+1) \right |. \]
Considering the right-hand side of~\eqref{MargNA} for large $n$, we calculate 
\begin{align}
\pmb{\varepsilon}_A(n+1) & = 
\frac{1}{\mathbf{Q}_A(n+1)} \, 
\sum_{m \geqslant 0} \frac{m}{n+m+1} \cdot \pmb{\Pi}_A(n+1,m)\\
& \leqslant \frac{1}{n+1} \cdot 
\frac{\mathbb{E}(M(\infty) ; N(\infty) = n+1)}{\mathbf{Q}_A(n+1)}
\nonumber
\end{align}
hence
\begin{equation}
\pmb{\varepsilon}_A(n+1) \leqslant \frac{1}{n+1} \cdot 
\mathbb{E}(M(\infty) \, \vert N(\infty) = n+1).
\label{MargNAbis}
\end{equation}
Since $M \leqslant M'$ and by the independence of random variables $M'$ and $N$, we have
$\mathbb{E}(M(\infty) \, \vert N(\infty) = n+1) \leqslant \mathbb{E}(M'(\infty) ) = A$. It thus follows from upper bound~\eqref{MargNAbis} that $\pmb{\varepsilon}_A(n+1) \rightarrow 0$ when $n \uparrow +\infty$ hence, after equality~\eqref{MargNA}, $\alpha \, \mathbf{Q}_A(n) \sim \mu \mathbf{Q}_A(n+1)$, that is,
$$
\frac{\mathbf{Q}_A(n+1)}{\mathbf{Q}_A(n)} \sim \varrho
$$
for large $n$. We conclude that the power series $\mathrm{F}_A(\UU,\VV)$ converges for all 
$\UU \in \mathbb{D}(0,\frac{1}{\varrho})$ and $\VV \in \mathbb{D}$. 

\textbf{(c)} Let $\overline{S}$ be the set of points $(\UU,\VV) \in \mathbb{C}^2$ where the power series 
$\mathrm{F}_A(\UU,\VV)$ converges absolutely and $S$ the interior of $\overline{S}$. We let
$$
S_0 = \{(\UU,\VV) \in S, \; \UU \VV \neq 0\}
$$
and consider the mapping $\lambda:(\UU,\VV) \in S_0 \mapsto (\log\vert \UU \vert, \log \vert \VV \vert) \in \mathbb{R}^2$. By~\cite[Chap.I, Th\'eor\`eme 3]{CHA90}, it is known that the set $S$ is logarithmically convex, that is, the image $\lambda(S_0)$ is convex in $\mathbb{R}^2$. 

Now, by Item \textbf{(a)} above, $S$ contains $\mathbb{D} \times \mathbb{C}$ hence the image 
$\lambda(S_0)$ contains the square $(-\infty,0) \times (-\infty,+\infty)$ in $\mathbb{R}^2$. Similarly, by Item \textbf{(b)} above, $S$ contains $\mathbb{D}(0,1/\varrho) \times \mathbb{D}$, hence the image 
$\lambda(S_0)$ contains the square $(-\infty, - \log \varrho) \times (-\infty,0)$. By the convexity property of $\lambda(S_0)$, we then deduce that $\lambda(S_0)$ contains its convex envelope and thus also the complementary square $(0,-\log \varrho) \times (0,+\infty)$, so that we eventually have 
$$
\lambda(S_0) \supset (-\infty,-\log\varrho) \times (-\infty,+\infty).
$$
The convergence domain $S$ of power series $\mathrm{F}_A(\UU,\VV)$ therefore contains the product 
$\pmb{\Omega} = \mathbb{D}(0,1/\varrho) \times \mathbb{C}$. Function $\mathrm{F}_A$ is thus analytically defined in $\pmb{\Omega}$, as claimed.

\section{Proof of Proposition~\ref{P3}}
\label{PL1}
In the following, we further assume that $\UU \notin \, (-\infty,0]$, $v \notin \; (-\infty,0]$ and $\log$ denotes the principal determination of the logarithm over the cut plane $\mathbb{C} \setminus \; (-\infty,0]$. By definition of $\mathrm{F}_A$, write
$$
\mathrm{F}_A(\UU,\VV) = I + J + K + L
$$
where
$$
I = \sum_{n \geqslant 1,m\geqslant 1} \mathbb{P}(N(\infty) = n, M(\infty) = m) \, u^n v^m,
$$
$$
J = \sum_{m \geqslant 1} \mathbb{P}(N(\infty) = 0, M(\infty) = m) \, v^m, \; \; 
K = \sum_{n \geqslant 1} \mathbb{P}(N(\infty) = n, M(\infty) = 0) \, u^n
$$
and $L = \pmb{\Pi}_A(0,0)$.

\textbf{(a)} First consider the sum $I$. With the change scale $n = Ax$ and $m = Ay$, 
$x > 0$, $y > 0$ and by Theorem~\ref{T1}, we have
\[ \mathbb{P}(N(\infty) = Ax, M(\infty) = Ay) u^{Ax}v^{Ay} \asymp \exp(-A \, h_{\UU,\VV}(x,y)) \]
for large $A$ ($f \asymp g$ meaning that $-\log f/A \sim -\log g/A$ when 
$A \uparrow +\infty$) where
\begin{align}
h_{\UU,\VV}(x,y) = & \, \Phi(x) - x \, \log \UU + \Psi(y) - y \, \log \VV
\label{h1} \\
= & \, \Phi(x) - x \, \log r + \Psi(y) - y \, \log s - i(x \zeta + y \eta)
\nonumber
\end{align}
with $\UU = r \, e^{i\zeta}$ and $v = s \, e^{i\eta}$ as in the statement of Proposition~\ref{P3}. For given real 
$r = \vert \UU \vert > 0$, $s = \vert \VV \vert > 0$, and after the respective definitions~\eqref{eq:Phi} and~\eqref{eq:Psi} of functions $\Phi$ and $\Psi$, the real-valued function $h_{r,s}: (x,y) \in \mathbb{R}^{+*2} \mapsto 
h_{r,s}(x,y)$ is easily shown to have a unique minimum at point $(X_\UU,Y_\VV) \in \mathbb{R}^{+*2}$ given by
\begin{equation}
X_\UU = \frac{\varrho \, r}{1 - \varrho r}, \quad Y_\VV = s.
\label{h2}
\end{equation}
The corresponding value $h_{r,s}(X_\UU,Y_\VV) = \Phi(X_\UU) - X_\UU \log r + \Psi(Y_\VV) - Y_\VV \log s$ 
is easily calculated as $h_{r,s}(X_\UU,Y_\VV) = -\log(1-\varrho) + \log(1 - \varrho r) - (s-1)$ so that
\begin{equation}
\exp \left( -A \, h_{r,s}(X_\UU,Y_\VV) \right) = \left ( \frac{1-\varrho}{1-\varrho r} \right )^A 
e^{A(s-1)}
\label{h3}
\end{equation}
while the second-order derivatives of $h_{r,s}$ at $(X_\UU,Y_\VV)$ are given by
$$
\frac{\partial^2 h_{r,s}}{\partial x^2}(X_\UU,Y_\VV) = 
\frac{(1-\varrho r)^2}{\varrho r}:= a_r, \qquad 
\frac{\partial^2 h_{r,s}}{\partial y^2}(X_\UU,Y_\VV) = 
\frac{1}{s} := b_s
$$
and $\partial_{xy}^2 h_{r,s}(X_\UU,Y_\VV) = 0$. Estimating the discrete sum I over the lattice 
$\mathbb{N}^{*2}$ by a Riemann integral over $\mathbb{R}^{+*2}$ with integration step $1/A$ and using asymptotics~\eqref{AsymptPA} for $\pmb{\Pi}_A(Ax,Ay)$, we further have
\begin{align}
I \sim & \, 
\int_{0^+}^{+\infty} A \, \mathrm{d}x \int_{0^+}^{+\infty} A \, \mathrm{d}y 
\; \pmb{\Pi}_A(Ax,Ay) \cdot u^{Ax} v^{Ay} 
\label{h4} \\
\sim & \, \frac{A}{2\pi} \int_{0^+}^{+\infty} \int_{0^+}^{+\infty} g(x,y) \, 
e^{-A \, h_{\UU,\VV}(x,y)} \mathrm{d}x\mathrm{d}y
\nonumber
\end{align}
with function $h_{\UU,\VV}$ introduced in~\eqref{h1}. Now applying the asymptotics~\eqref{Lapla0BIS} to evaluate the (complex-valued) integral~\eqref{h4} with help of~\eqref{h3}, we then obtain
\begin{multline*}
I \sim \,\frac{A}{2\pi} \times g(X_\UU,Y_\VV) \cdot 
e^{-A \, h_{r,s}(X_\UU,Y_\VV) + iA (\zeta X_\UU + \eta Y_\VV)} \; \times \\
\, \exp \left [ - \frac{A \zeta^2}{2 \, a_r} \right ] 
\exp \left [ - \frac{A \eta^2}{2 \, b_s} \right ] \, \sqrt{\frac{2\pi}{A \, a_r}} 
\sqrt{\frac{2\pi}{A \, b_s}}.
\end{multline*}
Using successively the expressions~\eqref{h3} for $e^{-A \, h_{r,s}(X_\UU,Y_\VV)}$ and the second-order derivatives $a_r$ and $b_s$ of $h_{r,s}$ at point $(X_\UU,Y_\VV)$, together with the definition of $g$ in~\eqref{AsymptPA} eventually reduces the latter estimate of $I$ to
\begin{multline}
I \sim \, \left ( \frac{1-\varrho}{1 - \varrho r}\right )^A e^{A(s-1)} 
\exp \left [ iA \left ( \frac{\varrho r \zeta }{1-\varrho r} + s \eta \right ) 
\right ] \; \times
\\
\, \exp \left [ - \frac{A}{2} \left ( \frac{\varrho r \zeta^2}{(1 - \varrho r)^2} + 
s \eta^2 \right ) \right ] \, G_0(\UU,\VV)
\label{h5}
\end{multline}
with coefficient
$$
G_0(\UU,\VV) = \frac{g(X_\UU,Y_\VV)}
{\partial^2_{xx}h_{r,s}(X_\UU,Y_\VV)\partial^2_{yy}h_{r,s}(X_\UU,Y_\VV)} = 
\left ( \frac{1-\varrho}{1-\varrho r}\right ) 
\bigl [ s + \varrho r (1-s) \bigr ]^{\frac{\alpha}{\theta}(1-r)-c}.
$$
To specify the definition domain of function $G_0$, first assume $s \leqslant 1$; then the argument 
$s + \varrho r (1-s) \geqslant s > 0$ for all $r$ (and thus also for $r < 1/\varrho$); now if $s > 1$, 
$s + \varrho r (1-s)$ is positive if and only if $\varrho r < 1 + 1/(s-1)$, which is fulfilled if $\varrho r < 1$. We thus conclude that function $G_0$ is well-defined and continuous over $\pmb{\Omega}$, and thus also in the subset $\pmb{\Omega}'$. At point $(\UU,\VV) = (1,1)$, in particular, we clearly have $r = s = 1$ so that $G_0(1,1) = 1$.

\textbf{(b)} Now address the second term $J$. By Theorem~\ref{T1} again, we can write
$$
\mathbb{P}(N(\infty) = 0, M(\infty) = Ay) \asymp e^{-A \, H(0,y)} \VV^{Ay} = 
e^{-A \, h_{1,\VV}(0,y)}
$$
with the notation~\eqref{h1} for function $h_{1,\VV}$. For $s = \vert \VV \vert > 0$, the real-valued function 
$y \in \mathbb{R}^{+*} \mapsto h_{1,s}(0,y)$ has a unique minimum at $y = Y_s = s > 0$, with value
$$
h_{1,s}(0,Y_\VV) = -\log(1-\varrho) - (s-1).
$$
The module of the sum $J$ is therefore of order $\vert J \vert \asymp e^{-A \, h_{1,s}(0,Y_\VV)} = 
(1-\varrho)^A e^{A(s-1)}$ and, after the estimate~\eqref{h5} of $I$, the ratio $I/J$ is of order
$$
\frac{\vert I \vert}{\vert J \vert} \asymp \frac{1}{(1-\varrho r)^A}
$$
and thus tends to 0 when $A \uparrow +\infty$ and for $(\UU,\VV) \in \pmb{\Omega}'$. We conclude that $I$ dominates $J$ for large $A$. 

\textbf{(c)} As to the third term $K$, it is similarly verified that $\vert I \vert 
/ \vert K \vert \asymp e^{As}$ with $s = \vert \VV \vert > 0$. As the latter ratio tends to $+\infty$ when $A \uparrow +\infty$, $I$ also dominates $K$ for large $A$. Finally, 
$$
\frac{\vert I \vert }{ \vert L \vert} \asymp \frac{e^{As}}{(1-\varrho r)^A}
$$
and $I$ also dominates $L$ for large $A$. 

After~\eqref{h5} and the latter discussion, asymptotics~\eqref{HEDP2} for $\mathrm{F}_A(\UU,\VV)$, 
$(\UU,\VV) \in \pmb{\Omega}'$ eventually follows.




\end{document}